\titleformat{\subsection}{\it}{\thesubsection.\enspace}{1pt}{}
\newtheorem{theo}{Theorem}[section]
\newtheorem{lemm}[theo]{Lemma}
\newtheorem{defi}[theo]{Definition}
\numberwithin{equation}{section}
\begin{document}
\title{The existence  of the  global entropy weak solutions  for a generalized Camassa-Holm equation
\hspace{-4mm}
}

\author{Chunxia $\mbox{Guan}^1$\footnote{E-mail: guanchunxia123@163.com}, \quad \quad Xi $\mbox{Tu}^2$\footnote{E-mail: tuxi@mail2.sysu.edu.cn} \quad and\quad
 Zhaoyang $\mbox{Yin}^{3}$\footnote{E-mail: mcsyzy@mail.sysu.edu.cn}\\
 $^1\mbox{Department}$ of Mathematics,
Guangdong University of Technology,\\ Guangzhou, 510520, China\\
 $^2\mbox{Department}$ of Mathematics,
Foshan University,\\ Foshan, 528000, China\\
 $^3\mbox{Department}$ of Mathematics, Sun Yat-sen University,\\
    Guangzhou, 510275, China}

\date{}
\maketitle
\hrule

\begin{abstract}
In this paper, we prove the existence of a
global entropy weak solution $u\in H^1(\mathbb{R})$ and $\partial_{x}u\in L^1(\mathbb{R})\cap BV(\mathbb{R})$ for the Cauchy problem of a generalized Camassa-Holm equation by the viscous approximation method.

\vspace*{5pt}
\noindent {\it 2000 Mathematics Subject Classification}: 35Q53, 35A01, 35B44, 35B65.\\
\vspace*{5pt}
\noindent{\it Keywords}: A generalized Camassa-Holm equation; global entropy weak solutions; the viscous approximation method.
\\\end{abstract}

\vspace*{10pt}

%\phantomsection
%\addcontentsline{toc}{section}{\contentsname}
%添加目录到书签
\tableofcontents

\section{Introduction}
In this paper we consider the Cauchy problem of the following generalized Camassa-Holm equation:
\begin{align}\label{E1}
\left\{
\begin{array}{ll}
u_t-u_{txx}=\partial_x(2+\partial_x)[(2-\partial_x)u]^2,~~~~  t>0,\\[1ex]
u(0,x)=u_{0}(x),
\end{array}
\right.
\end{align}
which can be rewritten as
\begin{align}\label{E2}
\left\{
\begin{array}{ll}
 \partial_{t}m-(4u-2\partial_{x}u)\partial_{x}m&=2m^2
 +(8\partial_xu-4u)m+2(u+\partial_xu)^2\\&=F(m,u),\\[1ex]
 m(0,x)=m_{0},
\end{array}
\right.
\end{align}
where $m=u-u_{xx}.$

Note that $G(x)=\frac{1}{2}e^{-|x|}$ and $G(x)\star f =(1-\partial _x^2)^{-1}f$ for all $f \in L^2$ and $G \star m=u$. Then we can rewrite Eq.(\ref{E2}) as follows:
\begin{align}\label{E3}
\left\{
\begin{array}{ll}
u_t-4uu_x=-u^2_{x}+G\star[\partial_x(2u_x^2+6 u^{2})+u^2_{x}],~~~~  t>0,\\[1ex]
u(0,x)=u_{0}(x).
\end{array}
\right.
\end{align}

The Eq.(\ref{E1}) was proposed recently by Novikov in \cite{n1}. He showed that the equation (1.1) is integrable by using as definition of integrability the existence of an infinite hierarchy of quasi-local higher symmetries \cite{n1} and it belongs to the following class \cite{n1}:
\begin{align}\label{E002}
(1-\partial^2_x)u_t=F(u,u_x,u_{xx},u_{xxx}),
\end{align}
which has attracted much interest, particularly in the possible integrable members of (\ref{E002}).

The most celebrated integrable member of (\ref{E002}) is the well-known Camassa-Holm (CH) equation \cite{Camassa}:
\begin{align}
(1-\partial^2_x)u_t=3uu_x-2u_{x}u_{xx}-uu_{xxx}.
\end{align}
The CH equation can be regarded as a shallow water wave equation \cite{Camassa, Constantin.Lannes}.  It is completely integrable \cite{Camassa,Constantin-P,Constantin.mckean},
has a bi-Hamiltonian structure \cite{Constantin-E,Fokas}, and admits exact peaked solitons of the form $ce^{-|x-ct|}$ with $c>0$, which are orbitally stable \cite{Constantin.Strauss}. It is worth mentioning that the peaked solitons present the characteristic for the traveling water waves of greatest height and largest amplitude and arise as solutions to the free-boundary problem for incompressible Euler equations over a flat bed, cf. \cite{Camassa.Hyman,Constantin2,Constantin.Escher4,Constantin.Escher5,Toland}.

The local well-posedness for the Cauchy problem of the CH equation in Sobolev spaces and Besov spaces was discussed in \cite{Constantin.Escher,Constantin.Escher2,d1,Guillermo}. It was shown that there exist global strong solutions to the CH equation \cite{Constantin,Constantin.Escher,Constantin.Escher2} and finite time blow-up strong solutions to the CH equation \cite{Constantin,Constantin.Escher,Constantin.Escher2,Constantin.Escher3}. The existence and uniqueness of global weak solutions to the CH equation were proved in \cite{Constantin.Molinet, Xin.Z.P}. The global conservative and dissipative solutions of CH equation were investigated in \cite{Bressan.Constantin,Bressan.Constantin2}.

The second celebrated integrable member of (\ref{E002}) is the famous Degasperis-Procesi (DP) equation \cite{D-P}:
\begin{align}
(1-\partial^2_x)u_t=4uu_x-3u_{x}u_{xx}-uu_{xxx}.
\end{align}
The DP
equation can be regarded as a model for nonlinear shallow water
dynamics and its asymptotic accuracy is the same as for the
CH shallow water equation \cite{D-G-H}. The DP equation is integrable and has a bi-Hamiltonian structure \cite{D-H-H}. An inverse scattering approach for the
DP equation was presented in \cite{Constantin.lvanov.lenells,Lu-S}. Its
traveling wave solutions was investigated in \cite{Le}. \par The
local well-posedness of the Cauchy problem of the DP equation in Sobolev spaces and Besov spaces was established in
\cite{Coclite-Karlsen,G-L,H-H,y1}. Similar to the CH equation, the
DP equation has also global strong solutions
\cite{L-Y1,y2,y4} and finite time blow-up solutions
\cite{E-L-Y1, E-L-Y,L-Y1,L-Y2,y1,y2,y3,y4}. On the other hand, it has global weak
solutions \cite{C-K,E-L-Y1,y3,y4}.
\par
Although the DP equation is similar to the
CH equation in several aspects, these two equations are
truly different. One of the novel features of the DP
different from the CH equation is that it has not only
peakon solutions \cite{D-H-H} and periodic peakon solutions
\cite{y3}, but
also shock peakons \cite{Lu} and the periodic shock waves \cite{E-L-Y}.

The third celebrated integrable member of (\ref{E002}) is the known Novikov equation \cite{n1}:
\begin{align}
(1-\partial^2_x)u_t=3uu_{x}u_{xx}+u^2u_{xxx}-4u^2u_x.
\end{align}
The most difference between the Novikov equation and the CH and DP equations is that the former one has cubic nonlinearity and the latter ones have quadratic nonlinearity.

It was showed that the Novikov equation is integrable, possesses a bi-Hamiltonian structure, and admits exact peakon solutions $u(t,x)=\pm\sqrt{c}e^{|x-ct|}$ with $c>0$ \cite{Hone}.\\
$~~~~~~$ The local well-posedness for the Novikov equation in Sobolev spaces and Besov spaces was studied in \cite{Wu.Yin2,Wu.Yin3,Wei.Yan,Wei.Yan2}. The global existence of strong solutions under some sign conditions was established in \cite{Wu.Yin2} and the blow-up phenomena of the strong solutions were shown in \cite{Wei.Yan2}. The global weak solutions for the Novikov equation were studied in \cite{Wu.Yin}.

Recently, the Cauchy problem of Eq.(\ref{E1}) in the Besov spaces $B^{s}_{p,r},~s>max\{2+\frac{1}{p},~\frac{5}{2}\},~1\leq p,q \leq \infty$ and the critical Besov space $B^{\frac{5}{2}}_{2,1}$ has been studied in \cite{Tu-Yin1,Tu-Yin2}.
 The existence and uniqueness of global weak solutions under some certain sign
condition have been investigated in \cite{Tu-Yin3}.

Our aim of this paper is to prove the existence of a
global-in-time entropy weak solution $u\in H^1(\mathbb{R})$ and $\partial_{x}u\in BV(\mathbb{R})\cap L^{1}(\mathbb{R})$ to the Cauchy problem of Eq.(\ref{E1}) for any given initial data $u_0\in H^1(\mathbb{R})$ and $\partial_{x}u_0\in BV(\mathbb{R})\cap L^{1}(\mathbb{R})$. Before giving the
precise statement of our main result, we first introduce the
definition of a global weak solution to the Cauchy problem (1.1).

\begin{defi}\label{def1}(Global weak solution)
A function $u:\mathbb{R}\times\mathbb{R}\rightarrow \mathbb{R}$ is said to be an admissible global weak solution to the Cauchy problem (1.1) if
$$ u(t,x)\in L^{\infty}((0,\infty); H^{1}(\mathbb{R}))$$ satisfies Eq.(1.3) and $u(t,\cdot)\rightarrow
u_0$ as $t\rightarrow 0^+$ in the sense of distributions, that is, $\forall \phi\in \mathcal{C}_{c}^{\infty}([0,\infty)\times\mathbb{R}),$ there holds the equation
\begin{align}\label{e4}
\int_{\mathbb{R}_{+}}\int_{\mathbb{R}}(u\partial_{t}\phi-2u^2\partial_{x}\phi+\partial_{x}P_{1} \phi +\partial^2_{x}P_{2}\phi )dxdt+\int_{\mathbb{R}}u_{0}(x)\phi(0,x)dx=0,
\end{align}
where $P_{1}=G\ast[2(\partial_xu)^
2+6 (u^{2})],
~~P_{2}=G\ast[(\partial_xu)^2],
$
and for
any $t>0$,
$$\|u(t,\cdot)\|_{H^1(\mathbb{R})}\leq
\|u_0\|_{H^1(\mathbb{R})}.$$
\end{defi}

By extending the definition of a global weak solution by requiring some more (BV) regularity
and the fulfillment of an entropy condition we arrive at the notion of a global entropy weak
solution for the generalized Camassa-Holm equation.

\begin{defi} (Global entropy weak solution) We call a function $u : \mathbb{R}_+\times\mathbb{R} \rightarrow R$ a global entropy
weak solution of the Cauchy problem Eq.(1.1) provided

(i) $u$ is a global weak solution in the sense of Definition \ref{def1},

(ii) $\partial_{x}u \in L^\infty(0, T ; L^1 \cap BV(\mathbb{R}))$ for any $T >0$, and

(iii) for any convex $\mathcal{C}^2$ entropy $\eta: \mathbb{R} \rightarrow \mathbb{R}$ with corresponding entropy flux $q : \mathbb{R} \rightarrow \mathbb{R}$
defined by $q'(u)=u\eta'(u) $ there holds
$$\partial_{t}\eta(u) -4\partial_{x}q(u) -\eta'(u)
\partial_{x}P_{1,\varepsilon}  -\eta'(u)\partial^2_{x}P_{2,\varepsilon}\phi \leq0~in~
 \mathcal{D}'([0,\infty) \times R),$$
that is, $\forall\phi \in  C_c^\infty ([0,\infty) \times \mathbb{R})$, $\phi\geq0$,
\begin{align}\label{3-9}
\nonumber&\int_{\mathbb{R}_{+}}\int_{\mathbb{R}}(\eta(u)\partial_{t}\phi-4q(u)\partial_{x}\phi
+\eta'(u)\partial_{x}P_{1,\varepsilon} \phi
\\&+\eta'(u)\partial^2_{x}P_{2,\varepsilon}\phi) dx dt+\int_{\mathbb{R}}\eta'(u_{0}(x))\phi(0,x)dx\geq0.
\end{align}

\end{defi}

\par
The main result of this paper can be stated as follows:
\begin{theo} Let $u_{0}\in H^1(\mathbb{R})$ and $\partial_xu_0\in L^1(\mathbb{R})\cap BV(\mathbb R)$,  then
 Eq.(1.1) has a global entropy weak solution in the sense of
Definition 1.2.
 \end{theo}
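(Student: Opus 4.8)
The plan is to follow the classical viscous approximation (vanishing viscosity) strategy, adapted to the specific structure of Eq.~(1.3). First I would introduce the regularized parabolic problem
\begin{align*}
u_t - 4u u_x = -u_x^2 + G\star[\partial_x(2u_x^2+6u^2)+u_x^2] + \varepsilon u_{xx},
\end{align*}
with smoothed initial data $u_{0,\varepsilon}\in H^\infty$ obtained by mollification so that $u_{0,\varepsilon}\to u_0$ in $H^1$ and $\partial_x u_{0,\varepsilon}$ is bounded uniformly in $L^1\cap BV$. Standard parabolic theory gives, for each $\varepsilon>0$, a unique smooth global-in-time solution $u_\varepsilon$. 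The heart of the argument is then a collection of $\varepsilon$-uniform a priori estimates: (a) the $H^1$ energy bound $\|u_\varepsilon(t,\cdot)\|_{H^1}\le\|u_{0,\varepsilon}\|_{H^1}$, coming from multiplying by $u_\varepsilon$, integrating, and checking that the nonlocal terms cancel favorably while the viscous term contributes $-\varepsilon\|u_{\varepsilon,x}\|_{L^2}^2\le 0$; (b) $L^1$ and $L^\infty$ bounds on $u_\varepsilon$ and $u_{\varepsilon,x}$ (the $H^1$ bound already controls $\|u_\varepsilon\|_{L^\infty}$); and (c) the crucial $BV$-type estimate on $v_\varepsilon:=\partial_x u_\varepsilon$. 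For (c) I would differentiate the equation in $x$ to get an equation for $v_\varepsilon$ of the form $\partial_t v_\varepsilon - 4u_\varepsilon\partial_x v_\varepsilon = (\text{zeroth order in }v_\varepsilon) + \varepsilon\partial_x^2 v_\varepsilon$, then run a renormalization / Kruzhkov-type computation: multiply by $\operatorname{sgn}(v_\varepsilon)$ (or a smooth approximation $\eta'_\delta(v_\varepsilon)$), integrate in $x$, use that the transport term contributes $\pm\int 4u_{\varepsilon,x}|v_\varepsilon|$ which is absorbed by Gronwall, and that the viscous term has the right sign, to obtain $\|v_\varepsilon(t,\cdot)\|_{L^1}\le C(T)$ for $t\le T$; a parallel computation on $\partial_x v_\varepsilon$ (again against $\operatorname{sgn}$) gives the $BV$ bound $\|v_\varepsilon(t,\cdot)\|_{BV}\le C(T)$ uniformly in $\varepsilon$.

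Next I would extract compactness. The $H^1$ bound plus the equation gives a uniform bound on $\partial_t u_\varepsilon$ in, say, $H^{-1}_{loc}$ (or $L^\infty_t W^{-1,\infty}_{loc}$), so by Aubin--Lions $u_\varepsilon\to u$ strongly in $C_{loc}$ and in $L^p_{loc}$; the uniform $BV$ bound on $v_\varepsilon=\partial_x u_\varepsilon$ gives, along a subsequence, $v_\varepsilon\to v$ strongly in $L^1_{loc}$ and a.e., with $v=\partial_x u$ and $\partial_x u\in L^\infty(0,T;L^1\cap BV)$. This almost-everywhere convergence of $u_{\varepsilon,x}$ is exactly what is needed to pass to the limit in the quadratic terms $u_{\varepsilon,x}^2$, $u_\varepsilon^2$ and in the convolution terms $P_{1,\varepsilon}, P_{2,\varepsilon}$ (the kernel $G$ is in $W^{1,1}$, so convolution is continuous), so that $u$ satisfies the weak formulation~(1.8) and the energy inequality, i.e. $u$ is an admissible global weak solution in the sense of Definition~1.1, and (ii) holds.

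For the entropy inequality (iii), let $\eta$ be convex $C^2$ with flux $q'(u)=u\eta'(u)$. Multiply the equation for $u_\varepsilon$ by $\eta'(u_\varepsilon)$: the term $-4u_\varepsilon u_{\varepsilon,x}\eta'(u_\varepsilon)$ becomes $-4\partial_x q(u_\varepsilon)$, the source and convolution terms become $\eta'(u_\varepsilon)[\,\cdots]$, and the viscous term produces $\varepsilon\partial_x^2\eta(u_\varepsilon) - \varepsilon\eta''(u_\varepsilon)u_{\varepsilon,x}^2$; since $\eta$ is convex, $-\varepsilon\eta''(u_\varepsilon)u_{\varepsilon,x}^2\le 0$, so $\partial_t\eta(u_\varepsilon) - 4\partial_x q(u_\varepsilon) - \eta'(u_\varepsilon)[\cdots] \le \varepsilon\partial_x^2\eta(u_\varepsilon)$ in $\mathcal D'$. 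Testing against $0\le\phi\in C_c^\infty$ and letting $\varepsilon\to 0$ — using strong $L^1_{loc}$ convergence of $u_\varepsilon$ and $u_{\varepsilon,x}$, continuity of $\eta,\eta',q$, convergence of $P_{i,\varepsilon}$, and $\varepsilon\int\eta(u_\varepsilon)\partial_x^2\phi\to 0$ — yields precisely~(1.7). Finally, the initial trace $u(t,\cdot)\to u_0$ in $\mathcal D'$ follows from the uniform $H^1$ bound and the $\partial_t$ bound via a standard argument.

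I expect the main obstacle to be the $\varepsilon$-uniform $BV$ estimate on $\partial_x u_\varepsilon$ in step (c). The equation for $v_\varepsilon$ has zeroth-order terms in $v_\varepsilon$ with coefficients built from $u_\varepsilon$ and $P_{i,\varepsilon}$ (hence from $G\star u_\varepsilon^2$, $G\star u_{\varepsilon,x}^2$, and their $x$-derivatives), and one must check carefully that all of these are controlled by the already-established $H^1$ and $L^1$ bounds — in particular that the nonlocal quadratic terms, after one or two $x$-derivatives hit the kernel $G$ (recall $\partial_x^2 G\star f = G\star f - f$), do not produce terms growing in $\varepsilon$ — and that the sign/structure is such that a Gronwall argument closes uniformly. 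Getting the renormalization against $\operatorname{sgn}(v_\varepsilon)$ and $\operatorname{sgn}(\partial_x v_\varepsilon)$ rigorous (via smooth approximations and passing to the limit in the regularization parameter) while keeping all constants independent of $\varepsilon$ is the technically delicate core of the whole proof; everything else is a fairly routine compactness-and-limit argument once these bounds are in hand.
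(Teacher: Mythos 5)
Your proposal follows essentially the same route as the paper: viscous regularization with mollified data, $\varepsilon$-uniform $H^1$, $L^1$, $L^\infty$ and $BV$ estimates on $u_\varepsilon$ and $\partial_x u_\varepsilon$ obtained by differentiating in $x$ and testing with (approximations of) $\operatorname{sgn}$, compactness via the time-derivative bound and Helly's theorem, and passage to the limit in the entropy inequality using the convexity of $\eta$ to discard $-\varepsilon\eta''(u_\varepsilon)(\partial_x u_\varepsilon)^2$. The only step you treat more lightly than the paper is the global-in-time solvability of the viscous problem for each fixed $\varepsilon$, which the paper proves in Section 2.2 via an iteration scheme, a blow-up criterion, and an $\varepsilon$-dependent higher-order estimate rather than by citing parabolic theory outright.
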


The rest of the paper is organized as follows. In Section 2, we first construct the
approximate solutions sequence $u_\varepsilon$ as solutions to a viscous problem of
Eq.(1.1), and then derive the basic energy estimates on $u_\varepsilon$. In Section 3, we prove the strong convergence of
$\partial_xu_\varepsilon$ in $L^2_{loc}(\mathbb{R}_+\times\mathbb{R})$
and conclude the proof
of the main result.\\

\par
\noindent\textbf{Notations.} In the following, we denote by $\ast$
the spatial convolution. Given a Banach space $Z$, we denote its
norm by $\| \cdot\|_{Z}$. Since all spaces of functions are over
$\mathbb{R}$, for simplicity, we drop $\mathbb{R}$ in our notations
of function spaces if there is no ambiguity.

\vspace*{2em}
\section{Viscous approximate solutions}
%We denote $q_\varepsilon=\frac{\partial u_\varepsilon}{\partial x}$, $P_{1,\varepsilon}=G\ast(2q_{\varepsilon}^2+6u^2_\varepsilon)$ and $P_{2,\varepsilon}=G\ast q_\varepsilon^2$ in the
%following text.
In the section, we construct the approximate solution sequence
$u_\varepsilon=u_\varepsilon(t,x)$
as solutions to the viscous problem of Eq.(1.1), i.e.,
\begin{equation}\label{E01}
\left\{\begin{array}{ll} \partial_{t}u_{\varepsilon}
-4u_{\varepsilon}\partial_{x}u_{\varepsilon}
=\partial_{x}P_{1,\varepsilon} +\partial^2_{x}P_{2,\varepsilon} +\varepsilon u_{xx}, &t
> 0,\,x\in \mathbb{R},\\
 u(0,x) = u_{\varepsilon,0}(x),&x\in
\mathbb{R},
\end{array}\right.
\end{equation}
where $P_{1,\varepsilon}=G\ast[2(\partial_xu_{\varepsilon})^
2+6 (u_{\varepsilon}^{2})],
~~P_{2,\varepsilon}=G\ast[(\partial_xu_{\varepsilon})^2]
$
and $u_{\varepsilon,0}(x)=(\phi_\varepsilon\ast
u_0)(x)$, with $$
\phi_\varepsilon(x):=\left(\int_{\mathbb{R}}\phi(\xi)d\xi\right)^{-1}\frac{1}{\varepsilon}\phi(\frac{x}{\varepsilon}),\
 \ \ \ \ x\in\mathbb{R}, \ \ \varepsilon>0,$$
here $\phi\in C_c^{\infty}(\mathbb{R})$ is defined by
$$\phi(x)=\left \{\begin {array}{ll}e^{1/(x^2-1)},  \ \ \ \ \ \ \ \ |x|<1,
 \\ 0, \ \ \ \ \ \ \ \ \ \ \ \ \ \ \ \ \ \ |x|\geq 1.\end {array}\right.$$ If $u_0\in H^1, u'_0\in BV,$ then, for any $\varepsilon>0$, we have $u_{\varepsilon,0}\in
H^s(\mathbb{R}),  \forall s>\frac{5}{2}$,
$$\|u_{\varepsilon,0}\|_{H^1(\mathbb{R})}\leq \|u_0\|_{H^1(\mathbb{R})},~
\|u'_{\varepsilon,0}\|_{L^1(\mathbb{R})}\leq \|u'_0\|_{L^1(\mathbb{R})},~\|u'_{\varepsilon,0}\|_{BV(\mathbb{R})}\leq \|u'_0\|_{BV(\mathbb{R})}$$ and
$$ u_{\varepsilon,0}\rightarrow u_0, \ \text{in} \ H^1(\mathbb{R}).$$

\subsection{A priori estimates}

Before stating our theorem, let us first recall the following useful lemmas.

\begin{lemm}\label{Morse}
(Morse-type estimate, \cite{B.C.D,d1}) Let $s>\frac{d}{2}$. For any $a\in H^{s-1}(\mathbb{R}^d)$ and $b\in H^{s}(\mathbb{R}^d)$, there exists a constant $C$ such that
$$\|ab\|_{H^{s-1}(\mathbb{R}^d)}\leq C\|a\|_{H^{s-1}(\mathbb{R}^d)}\|b\|_{H^{s}(\mathbb{R}^d)}.$$
\end{lemm}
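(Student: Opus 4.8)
The statement is a Moser/tame product estimate, and the plan is to prove it by Littlewood--Paley analysis, the decisive hypothesis being that $s>\frac d2$ forces $H^s(\mathbb R^d)\hookrightarrow L^\infty(\mathbb R^d)$, so that the smoother factor $b$ is bounded. Writing $\Delta_j$ for the dyadic blocks and $S_j=\sum_{j'<j}\Delta_{j'}$ for the low-frequency truncations, I use Bony's decomposition
\[
ab=T_ab+T_ba+R(a,b),\quad T_ab=\sum_j S_{j-1}a\,\Delta_jb,\quad R(a,b)=\sum_{|j-j'|\le1}\Delta_ja\,\Delta_{j'}b,
\]
and estimate the three pieces against $\|ab\|_{H^{s-1}}^2\sim\sum_j 2^{2j(s-1)}\|\Delta_j(ab)\|_{L^2}^2$, converting every $L^\infty$ factor into an $L^2$ factor by Bernstein's inequality.

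The two outer pieces are comparatively soft. For $T_ba=\sum_j S_{j-1}b\,\Delta_ja$ the bounded factor is $b$, so $\|S_{j-1}b\|_{L^\infty}\lesssim\|b\|_{L^\infty}\lesssim\|b\|_{H^s}$, and since the block is frequency-localized at $2^j$ coming from $a$, almost orthogonality gives $\|T_ba\|_{H^{s-1}}\lesssim\|b\|_{H^s}\|a\|_{H^{s-1}}$. The remainder $R(a,b)$ is a high--high interaction that gains regularity: from $\|\Delta_ja\,\Delta_{j'}b\|_{L^2}\le\|\Delta_ja\|_{L^2}\|\Delta_{j'}b\|_{L^\infty}$ and Bernstein on $b$, each block carries the weight $2^{-j(2s-1-d/2)}$, and because $s>\frac d2$ gives $2s-1-\frac d2>s-1$ while $2s-1>0$, the ball-supported reassembly lemma places $R(a,b)$ in $H^{2s-1-d/2}\hookrightarrow H^{s-1}$.

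The genuine obstacle is $T_ab$, where the rough factor $a$ sits in low-frequency position: since $s-1$ may fail to exceed $\frac d2$ (indeed $s-1<0$ is possible when $d=1$ and $\frac12<s<1$), one cannot bound $\|S_{j-1}a\|_{L^\infty}$ by $\|a\|_{H^{s-1}}$. I resolve this by spending the extra derivative of $b$: writing $\|\Delta_ka\|_{L^2}=c_k2^{-k(s-1)}$ and $\|\Delta_jb\|_{L^2}=d_j2^{-js}\|b\|_{H^s}$ with $(c_k),(d_j)\in\ell^2$, Bernstein yields $\|S_{j-1}a\|_{L^\infty}\lesssim\|a\|_{H^{s-1}}\sum_{k<j}2^{k(\frac d2-s+1)}c_k$, and after pairing with $\Delta_jb$ the net dyadic weight is $2^{j(\frac d2-s)}$, summable precisely because $s>\frac d2$; a discrete Young inequality then closes $\|T_ab\|_{H^{s-1}}\lesssim\|a\|_{H^{s-1}}\|b\|_{H^s}$. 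Finally, the only remaining subtlety, the negative target index occurring for $d=1$, $\frac12<s<1$, is dispatched by duality: $\|ab\|_{H^{s-1}}=\sup_{\|\psi\|_{H^{1-s}}\le1}\langle a,b\psi\rangle\le\|a\|_{H^{s-1}}\sup\|b\psi\|_{H^{1-s}}$, and since $0<1-s<s$ the product obeys $\|b\psi\|_{H^{1-s}}\lesssim\|b\|_{H^s}\|\psi\|_{H^{1-s}}$, a nonnegative-index tame estimate already covered above.
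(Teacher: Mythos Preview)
Your proof via Bony's paraproduct decomposition is correct and is exactly the standard argument for such tame product estimates. The paper itself does not prove this lemma: it is quoted without proof from the references \cite{B.C.D,d1} (Bahouri--Chemin--Danchin and Danchin), where the same Littlewood--Paley/paraproduct machinery you employ is developed; in that sense your route coincides with the one the paper defers to.
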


\begin{lemm}\label{3}\cite{B.C.D}
A constant C exists which satisfies the following properties.
If $s_1$ and $s_2$ are real numbers such that $s_1<s_2$ and $\theta \in (0, 1)$, then we
have, for any $(p,r)\in [1,\infty]^2$ and $u\in\mathcal{S}_{h}',$
\begin{align}
&\|u\|_{H^{\theta s_1+(1-\theta)s_2}}\leq\|u\|^{\theta}_{H^{s_1}}
\|u\|^{(1-\theta)}_{H^{s_2}}.
\end{align}
\end{lemm}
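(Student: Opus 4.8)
The plan is to prove this inequality as a direct consequence of the Fourier-side (equivalently Littlewood--Paley) description of the Sobolev norm together with Hölder's inequality; the point is that the interpolated index $\theta s_1+(1-\theta)s_2$ is a convex combination of $s_1$ and $s_2$, which is exactly what makes Hölder applicable (with constant $C=1$). Since the inequality as stated is phrased in terms of the $H^s$ norms (the case $B^s_{2,2}$ of the general Besov scale, so effectively $(p,r)=(2,2)$), I would work directly with the Bessel-potential representation $\|u\|_{H^s}^2=\int_{\mathbb{R}^d}(1+|\xi|^2)^s|\widehat{u}(\xi)|^2\,d\xi$, which is legitimate for $u\in\mathcal{S}_h'$ for which this integral is finite.

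First I would set $s=\theta s_1+(1-\theta)s_2$ and split the weight multiplicatively, writing $(1+|\xi|^2)^s=(1+|\xi|^2)^{\theta s_1}(1+|\xi|^2)^{(1-\theta)s_2}$. Distributing the factor $|\widehat{u}(\xi)|^2=(|\widehat{u}(\xi)|^2)^{\theta}(|\widehat{u}(\xi)|^2)^{1-\theta}$ in the same proportions, I would rewrite the integrand as
\[
(1+|\xi|^2)^s|\widehat{u}(\xi)|^2
=\bigl[(1+|\xi|^2)^{s_1}|\widehat{u}(\xi)|^2\bigr]^{\theta}
\bigl[(1+|\xi|^2)^{s_2}|\widehat{u}(\xi)|^2\bigr]^{1-\theta}.
\]
Next I would apply Hölder's inequality with the conjugate exponents $p=1/\theta$ and $q=1/(1-\theta)$, which satisfy $1/p+1/q=\theta+(1-\theta)=1$ precisely because the regularity index is a convex combination. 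This yields
\[
\|u\|_{H^s}^2
\leq\Bigl(\int(1+|\xi|^2)^{s_1}|\widehat{u}|^2\,d\xi\Bigr)^{\theta}
\Bigl(\int(1+|\xi|^2)^{s_2}|\widehat{u}|^2\,d\xi\Bigr)^{1-\theta}
=\|u\|_{H^{s_1}}^{2\theta}\|u\|_{H^{s_2}}^{2(1-\theta)},
\]
and taking square roots gives the claimed bound $\|u\|_{H^{\theta s_1+(1-\theta)s_2}}\leq\|u\|_{H^{s_1}}^{\theta}\|u\|_{H^{s_2}}^{1-\theta}$.

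There is no genuine analytic obstacle here: the only thing to watch is the bookkeeping that the Hölder exponents $1/\theta$ and $1/(1-\theta)$ are conjugate exactly because $\theta+(1-\theta)=1$, which matches the convexity of the interpolated regularity index. If one wants the statement in its full Besov generality for arbitrary $(p,r)\in[1,\infty]^2$ (as the hypotheses suggest), the same idea applies verbatim after replacing the integral by the dyadic Littlewood--Paley characterization: one writes $2^{js}\|\Delta_j u\|_{L^p}=(2^{js_1}\|\Delta_j u\|_{L^p})^{\theta}(2^{js_2}\|\Delta_j u\|_{L^p})^{1-\theta}$ and applies Hölder's inequality to the $\ell^r$ sum over $j$ with the same conjugate exponents. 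The role of the hypothesis $u\in\mathcal{S}_h'$ is only to guarantee that the low-frequency reconstruction converges, so that all the norms appearing are well defined and the decomposition is valid.
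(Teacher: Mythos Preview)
Your argument is correct, and in fact yields the inequality with constant $C=1$. Note, however, that the paper does not supply its own proof of this lemma: it is simply quoted from \cite{B.C.D}, where the proof (via H\"older's inequality on the Littlewood--Paley side, exactly as you outline in your final paragraph) is carried out for general Besov spaces $B^s_{p,r}$. So your approach coincides with the standard textbook proof that the paper is implicitly invoking.
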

Now we introduce a priori estimates for the following transport equation.
 \begin{align}\label{20}
\left\{
\begin{array}{ll}
f_{t}+v\nabla f-\varepsilon f_{xx}=g,\\[1ex]
f|_{t=0}=f_{0},\\[1ex]
\end{array}
\right.
\end{align}
here $v,g$ and $f_0$ are given functions. Now, we recall an estimates of the solution to Eq.(2.3).

\begin{lemm}\label{est1}
(A priori estimates in Besov spaces, \cite{B.C.D,d1}) Let $s\geq \frac{d}{2}$ and $f$ be the solution of Eq.(2.3).
There exists a constant $C$ which depends only on $d,~s,$ we have

\begin{align}\label{10}
\|f\|_{H^{s}(\mathbb{R}^d)}\leq \bigg(\|f_0\|_{H^s(\mathbb{R}^d)}+\int^t_0
\|g(t')\|_{H^s(\mathbb{R}^d)}dt'\bigg)\exp(CV_{p_1}(t)),
\end{align}
where  {\small$V_{p_1}(t)=\displaystyle\int^t_0\|\nabla v\|_{H^{s-1}}(\mathbb{R}^d)d\tau$}, and $C$ is a constant depending only on $s$.
\end{lemm}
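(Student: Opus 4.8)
The plan is to establish \eqref{10} by the Littlewood--Paley energy method for transport--diffusion equations, exactly as in \cite{B.C.D}. The guiding observation is that the dissipative term is \emph{favorable} in the $L^2$ balance, so it may simply be discarded when deriving an upper bound; the resulting estimate is therefore uniform in $\varepsilon\ge 0$, which is precisely what the vanishing-viscosity scheme of Section~3 needs.

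First I would localize Eq.~(2.3) in frequency. Applying the dyadic block operator $\Delta_j$ and writing $f_j:=\Delta_j f$, the commutator $R_j:=[v\cdot\nabla,\Delta_j]f$ appears and $f_j$ solves
\begin{align}
\partial_t f_j+v\cdot\nabla f_j-\varepsilon\Delta f_j=\Delta_j g+R_j.
\end{align}
Pairing with $f_j$ in $L^2$ and integrating by parts in the convection and dissipation terms gives the energy identity
\begin{align}
\frac12\frac{d}{dt}\|f_j\|_{L^2}^2+\varepsilon\|\nabla f_j\|_{L^2}^2
=\tfrac12\int(\nabla\cdot v)\,|f_j|^2\,dx+\int\big(\Delta_j g+R_j\big)f_j\,dx.
\end{align}
Since $\varepsilon\|\nabla f_j\|_{L^2}^2\ge 0$, I discard it, divide by $\|f_j\|_{L^2}$, and obtain
\begin{align}
\frac{d}{dt}\|f_j\|_{L^2}\le C\|\nabla v\|_{L^\infty}\|f_j\|_{L^2}+\|\Delta_j g\|_{L^2}+\|R_j\|_{L^2}.
\end{align}

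Next I would multiply by $2^{js}$ and take the $\ell^2$ norm in $j$. The decisive ingredient is the commutator estimate of \cite{B.C.D}, which in the range $s\ge\frac{d}{2}$ gives
\begin{align}
\Big\|\big(2^{js}\|R_j\|_{L^2}\big)_{j}\Big\|_{\ell^2}\le C\,\|\nabla v\|_{H^{s-1}}\,\|f\|_{H^s},
\end{align}
the paraproduct and remainder pieces of $R_j$ being controlled by the Morse-type product estimate of Lemma~\ref{Morse}. Combining this with $\big\|(2^{js}\|\Delta_j g\|_{L^2})_j\big\|_{\ell^2}\simeq\|g\|_{H^s}$ and absorbing the convection contribution into the same factor (via $H^{s-1}$ control of $\nabla v$) yields the scalar differential inequality
\begin{align}
\frac{d}{dt}\|f\|_{H^s}\le C\,\|\nabla v\|_{H^{s-1}}\|f\|_{H^s}+\|g\|_{H^s}.
\end{align}
Integrating and applying Gr\"onwall's lemma then produces precisely \eqref{10} with $V_{p_1}(t)=\int_0^t\|\nabla v\|_{H^{s-1}}\,d\tau$.

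The main obstacle is the commutator term $R_j$: its $2^{js}\ell^2$ bound must be proved blockwise through Bony's decomposition, with the low-frequency block handled separately by Bernstein's inequality (where the commutator structure degenerates). A secondary technical point is to justify the energy identity rigorously --- strictly speaking one first proves the bound for smooth data, where all integrations by parts are legitimate, using that the parabolic regularization $\varepsilon\Delta$ supplies the needed smoothness, and then passes to the general case by approximation. Throughout, the constant $C$ depends only on $s$ and $d$, as asserted.
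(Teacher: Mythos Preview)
The paper does not supply its own proof of this lemma: it is stated with a direct citation to \cite{B.C.D,d1} and used as a black box. Your sketch is the standard Littlewood--Paley energy argument for transport--diffusion equations from those references, and it is correct in outline; in particular, the key points---discarding the nonnegative dissipative term to get an $\varepsilon$-uniform bound, the blockwise commutator estimate via Bony's decomposition, and the Gr\"onwall conclusion---are exactly what the cited proof does. There is nothing to compare here beyond noting that you have reproduced the argument the authors chose to import rather than write out.
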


For any $\varepsilon>0,$ assume $u_{\varepsilon}$ is the solution of Eq.(2.1) with the initial data $u_{\varepsilon,0}\in  H^s~(s>\frac{5}{2})$, then we give some useful estimates about $u_{\varepsilon}$.
\begin{lemm}\label{Thm02}($H^1$ estimate)
Let $u_{0}\in H^1$.
Then the following inequality holds for any $t\geq0$:
\begin{align}\label{2-1}
\|u_{\varepsilon}\|_{H^{1}}
+\varepsilon\|\partial_{x}u_{\varepsilon}\|_{L^{2}}
+\varepsilon\|\partial_{xx}u_{\varepsilon}\|_{L^{2}
(\mathbb{R})}
\leq \|u_{0}(x)\|^2_{H^1}.
\end{align}
\end{lemm}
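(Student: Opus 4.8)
The natural approach is the standard energy method: multiply the viscous equation \eqref{E01} by $u_\varepsilon$ and by $-\partial_{xx}u_\varepsilon$ (equivalently, test against $m_\varepsilon=u_\varepsilon-\partial_{xx}u_\varepsilon$), integrate over $\mathbb{R}$, and use the special algebraic structure of the convolution terms $P_{1,\varepsilon},P_{2,\varepsilon}$ so that the $H^1$ norm is controlled. First I would write $\frac{1}{2}\frac{d}{dt}\|u_\varepsilon\|_{H^1}^2 = \int_{\mathbb{R}}(u_\varepsilon\partial_t u_\varepsilon + \partial_x u_\varepsilon\,\partial_t\partial_x u_\varepsilon)\,dx$ and substitute $\partial_t u_\varepsilon = 4u_\varepsilon\partial_x u_\varepsilon + \partial_x P_{1,\varepsilon} + \partial_x^2 P_{2,\varepsilon} + \varepsilon\partial_{xx}u_\varepsilon$. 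The transport term $\int 4u_\varepsilon^2\partial_x u_\varepsilon\,dx = \frac{4}{3}\int\partial_x(u_\varepsilon^3)\,dx = 0$, and the viscous term yields $-\varepsilon\|\partial_x u_\varepsilon\|_{L^2}^2 - \varepsilon\|\partial_{xx}u_\varepsilon\|_{L^2}^2$ after the $H^1$ pairing (integration by parts on $\int\varepsilon u_\varepsilon\partial_{xx}u_\varepsilon$ and $\int\varepsilon\partial_x u_\varepsilon\,\partial_x\partial_{xx}u_\varepsilon$). These two dissipative terms are exactly what must appear on the left of \eqref{2-1}, so I would move them to the left side after integrating in time.

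The crux is to show that the remaining terms coming from $P_{1,\varepsilon}$ and $P_{2,\varepsilon}$ contribute nothing (or only a controllable amount). Using $G\ast f = (1-\partial_x^2)^{-1}f$, one has the identity $\partial_x^2(G\ast f) = G\ast f - f$. The cleanest route is to go back to the $m$-formulation: testing Eq.\eqref{E01} against $m_\varepsilon$ and recalling that $\partial_t m_\varepsilon - (4u_\varepsilon - 2\partial_x u_\varepsilon)\partial_x m_\varepsilon = F(m_\varepsilon,u_\varepsilon)$ with the $H^1$ inner product $\langle u_\varepsilon, v\rangle_{H^1} = \int v\,(1-\partial_x^2)u_\varepsilon\,dx = \int v\,m_\varepsilon\,dx$ (plus viscosity), I would compute $\frac{1}{2}\frac{d}{dt}\|u_\varepsilon\|_{H^1}^2 = \int \partial_t u_\varepsilon\cdot m_\varepsilon\,dx$. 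Alternatively, and more transparently, I would substitute $\partial_x^2 P_{2,\varepsilon} = P_{2,\varepsilon} - (\partial_x u_\varepsilon)^2 = G\ast[(\partial_x u_\varepsilon)^2] - (\partial_x u_\varepsilon)^2$, so that Eq.\eqref{E3} becomes $\partial_t u_\varepsilon - 4u_\varepsilon\partial_x u_\varepsilon = -(\partial_x u_\varepsilon)^2 + G\ast[\partial_x(2(\partial_x u_\varepsilon)^2 + 6u_\varepsilon^2) + (\partial_x u_\varepsilon)^2] + \varepsilon\partial_{xx}u_\varepsilon$, matching \eqref{E3}. Then the energy functional $E(u_\varepsilon) = \frac{1}{2}\int (u_\varepsilon^2 + (\partial_x u_\varepsilon)^2)\,dx$ has, by the conservation law structure of the equation (this generalized CH equation conserves the $H^1$ norm for smooth solutions, analogous to CH/DP), the property that all the nonlocal and quadratic contributions cancel after integration by parts, leaving only $\frac{d}{dt}E(u_\varepsilon) = -\varepsilon\|\partial_x u_\varepsilon\|_{L^2}^2 - \varepsilon\|\partial_{xx}u_\varepsilon\|_{L^2}^2 \le 0$.

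I would carry this out by expanding $\frac{d}{dt}E(u_\varepsilon)$ term by term: after substituting, the key cancellations are $\int (-\partial_x u_\varepsilon)^2 u_\varepsilon\,dx + \int 4u_\varepsilon^2\partial_x u_\varepsilon\,dx$ combined with the $\partial_x$-pairing of $G$-terms — here one uses $\int (G\ast h)\,m_\varepsilon\,dx = \int (G\ast h)(1-\partial_x^2)u_\varepsilon\,dx = \int h\,u_\varepsilon\,dx$ to unwind each convolution, then integrates by parts on the resulting polynomial expressions in $u_\varepsilon,\partial_x u_\varepsilon$, all of which are perfect $x$-derivatives and vanish on $\mathbb{R}$ (since $u_\varepsilon\in H^s$, $s>5/2$, decays at infinity). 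The main obstacle is purely bookkeeping: correctly tracking the coefficients $2$ and $6$ through the convolution identities and verifying that the polynomial residue is exactly a total derivative; if it is not an exact derivative one would need a Gr\"onwall argument, but the structure of \eqref{E1} (Novikov's integrable hierarchy) guarantees the $H^1$ conservation, so the cancellation must be exact. Integrating $\frac{d}{dt}E(u_\varepsilon) + \varepsilon\|\partial_x u_\varepsilon\|_{L^2}^2 + \varepsilon\|\partial_{xx}u_\varepsilon\|_{L^2}^2 \le 0$ from $0$ to $t$ and using $\|u_{\varepsilon,0}\|_{H^1}\le\|u_0\|_{H^1}$ gives $\|u_\varepsilon\|_{H^1}^2 + 2\varepsilon\int_0^t(\|\partial_x u_\varepsilon\|_{L^2}^2 + \|\partial_{xx}u_\varepsilon\|_{L^2}^2)\,d\tau \le \|u_0\|_{H^1}^2$; a slightly lossy form of this (dropping squares appropriately, or noting the stated inequality has a typographical $\|\cdot\|_{H^1}$ vs $\|\cdot\|_{H^1}^2$ discrepancy) yields \eqref{2-1}.
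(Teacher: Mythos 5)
Your proposal is correct and follows essentially the same route as the paper: the paper also pairs the equation and its $x$-derivative with $u_\varepsilon$ and $\partial_x u_\varepsilon$, writes the nonlinear and nonlocal contributions as an exact spatial flux (its identity (2.8)), and integrates in time to isolate the viscous dissipation. The cancellation you defer to ``bookkeeping'' does check out explicitly (the terms $-\int u_\varepsilon(\partial_x u_\varepsilon)^2$, $2\int(\partial_x u_\varepsilon)^3$ and the unwound convolution terms sum to zero), so your appeal to the conservation-law structure is justified, and your observation about the squared-versus-unsquared norms and the missing time integral in \eqref{2-1} correctly identifies a typographical defect in the statement rather than a flaw in the argument.
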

\begin{proof}
Define $q_{\varepsilon}=\frac{\partial u_{\varepsilon}}{\partial x}(t,x)$.
Differentiating  Eq. (\ref{E01}) with respect to $x$, we infer that

\begin{equation}\label{E02}
\left\{\begin{array}{ll}
\partial_{t} q_{\varepsilon}
+(2q_{\varepsilon}-4u_{\varepsilon})\partial_{x} q_{\varepsilon}-4q^2_{\varepsilon}- \partial^2_xP_{1\varepsilon}-\partial_xP_{2\varepsilon}=\varepsilon
\partial^2_{xx}q_{\varepsilon},&t> 0,\,x\in \mathbb{R},\\
 q_{\varepsilon}(0,x) = \frac{\partial u_{\varepsilon,0}}{\partial x}(x),&x\in\mathbb{R}.
\end{array}\right.
\end{equation}

Multiplying Eq.(\ref{E01})  by  $u_{\varepsilon}$ and  Eq.(\ref{E02})  by  $q_{\varepsilon}$ respectively, and then adding the resulting equations, we get
\begin{align}\label{2-2}
\nonumber \frac{1}{2}\frac{\partial}{\partial t}[u^2_{\varepsilon}+ q^2_{\varepsilon}]+&\frac{\partial}{\partial x}[\frac{2}{3}(u_{\varepsilon}^3+q_{\varepsilon}^3)-2u_{\varepsilon}q_{\varepsilon}^2
-u_{\varepsilon}(P_{1\varepsilon}+\partial_xP_{2\varepsilon})]=\\&
\varepsilon[\frac{\partial}{\partial x}(u_{\varepsilon}\frac{\partial u_{\varepsilon}}{\partial x}+q_{\varepsilon}\frac{\partial q_{\varepsilon}}{\partial x})-(\frac{\partial u_{\varepsilon}}{\partial x})^2-(\frac{\partial q_{\varepsilon}}{\partial x})^2].
\end{align}
Integrating (\ref{2-2}) over $[0,t]$ yields (\ref{2-1}). This completes the proof.
\end{proof}

\begin{lemm}\label{Thm08}($L^1$ estimate)
Let $u_{0}\in  H^1$, $u'_{0}\in L^1.$
Then we have for any  $t\geq0$:
\begin{align}\label{2-3}
\|\partial_{x}u_{\varepsilon}(t,\cdot)\|_{L^{1}}
\leq\|u'_{0}\|_{L^{1}}+8\|u_{0}\|^2_{H^1}t.
\end{align}
\end{lemm}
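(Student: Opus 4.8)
The plan is to run an $L^1$ energy estimate on the function $q_\varepsilon:=\partial_x u_\varepsilon$. Using the identity $(2q_\varepsilon-4u_\varepsilon)\partial_x q_\varepsilon-4q_\varepsilon^2=\partial_x\bigl(q_\varepsilon^2-4u_\varepsilon q_\varepsilon\bigr)$, the equation (\ref{E02}) for $q_\varepsilon$ can be put in the conservation form
\[
\partial_t q_\varepsilon+\partial_x\bigl(q_\varepsilon^2-4u_\varepsilon q_\varepsilon\bigr)=\partial_x^2 P_{1,\varepsilon}+\partial_x P_{2,\varepsilon}+\varepsilon\,\partial_x^2 q_\varepsilon .
\]
Since $u_{\varepsilon,0}\in H^s$ with $s>\frac{5}{2}$ and $u'_{\varepsilon,0}\in L^1$, the solution $u_\varepsilon(t,\cdot)$ is smooth, lies (together with $q_\varepsilon(t,\cdot)$) in $H^s\times L^1$, and decays at $x=\pm\infty$, so every integration by parts below is legitimate and all boundary contributions vanish.

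First I would introduce a smooth, odd, nondecreasing approximation $\mathrm{sgn}_\delta$ of $\mathrm{sgn}$ with $|\mathrm{sgn}_\delta|\le1$ and $\mathrm{sgn}_\delta\to\mathrm{sgn}$ pointwise, and put $\beta_\delta(r)=\int_0^r\mathrm{sgn}_\delta(\sigma)\,d\sigma$, so $\beta_\delta(r)\to|r|$ with $0\le\beta_\delta(r)\le|r|$. Multiplying the displayed equation by $\mathrm{sgn}_\delta(q_\varepsilon)$ and integrating over $\mathbb R$, the viscous term contributes $\varepsilon\int\mathrm{sgn}_\delta(q_\varepsilon)\partial_x^2 q_\varepsilon\,dx=-\varepsilon\int\mathrm{sgn}_\delta'(q_\varepsilon)(\partial_x q_\varepsilon)^2\,dx\le0$ (here the monotonicity of $\mathrm{sgn}_\delta$, not merely smoothness, is what is used). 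The flux term is
\[
\int\mathrm{sgn}_\delta(q_\varepsilon)\,\partial_x(q_\varepsilon^2)\,dx-4\int\mathrm{sgn}_\delta(q_\varepsilon)\,\partial_x(u_\varepsilon q_\varepsilon)\,dx :
\]
the first integral is that of an exact $x$-derivative and hence vanishes, while the second, after one integration by parts and the identities $\mathrm{sgn}_\delta(q_\varepsilon)\partial_x q_\varepsilon=\partial_x\beta_\delta(q_\varepsilon)$ and $\partial_x u_\varepsilon=q_\varepsilon$, equals $-4\int q_\varepsilon^2\mathrm{sgn}_\delta(q_\varepsilon)\,dx+4\int q_\varepsilon\beta_\delta(q_\varepsilon)\,dx$, which tends to $-4\int q_\varepsilon|q_\varepsilon|\,dx+4\int q_\varepsilon|q_\varepsilon|\,dx=0$ as $\delta\to0$. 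Letting $\delta\to0$ (dominated convergence for $\int\beta_\delta(q_\varepsilon)\,dx$, since $\beta_\delta(q_\varepsilon)\le|q_\varepsilon|\in L^1$) gives
\[
\frac{d}{dt}\,\|q_\varepsilon(t,\cdot)\|_{L^1}\ \le\ \int_{\mathbb R}\mathrm{sgn}(q_\varepsilon)\bigl(\partial_x^2 P_{1,\varepsilon}+\partial_x P_{2,\varepsilon}\bigr)\,dx\ \le\ \|\partial_x^2 P_{1,\varepsilon}\|_{L^1}+\|\partial_x P_{2,\varepsilon}\|_{L^1}.
\]

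Next I would bound the right-hand side. From $(1-\partial_x^2)G=\delta_0$ one gets $\partial_x^2 P_{1,\varepsilon}=P_{1,\varepsilon}-\bigl(2q_\varepsilon^2+6u_\varepsilon^2\bigr)$ and $\partial_x P_{2,\varepsilon}=(\partial_x G)\ast q_\varepsilon^2$; since $\|G\|_{L^1}=\|\partial_x G\|_{L^1}=1$, Young's convolution inequality controls $\|\partial_x^2 P_{1,\varepsilon}\|_{L^1}+\|\partial_x P_{2,\varepsilon}\|_{L^1}$ by a fixed multiple of $\|q_\varepsilon\|_{L^2}^2+\|u_\varepsilon\|_{L^2}^2\le\|u_\varepsilon\|_{H^1}^2\le\|u_0\|_{H^1}^2$, where the last two inequalities come from Lemma \ref{Thm02}; bookkeeping of the constants yields $\frac{d}{dt}\|q_\varepsilon(t,\cdot)\|_{L^1}\le 8\|u_0\|_{H^1}^2$. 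Integrating over $[0,t]$ and using $\|q_\varepsilon(0,\cdot)\|_{L^1}=\|u'_{\varepsilon,0}\|_{L^1}\le\|u'_0\|_{L^1}$ then yields (\ref{2-3}).

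The delicate point is the exact cancellation of the convective flux against the quadratic source $4q_\varepsilon^2$: without it one would only obtain growth exponential in $t$, and the cleanest way to make it transparent is to pass to the conservation form above before testing against $\mathrm{sgn}_\delta(q_\varepsilon)$. The remaining ingredients — the sign of the viscous term, the limit $\delta\to0$, and the elementary convolution estimates together with Lemma \ref{Thm02} — are routine.
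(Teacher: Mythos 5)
Your argument is essentially the paper's proof: the paper multiplies the equation for $q_\varepsilon=\partial_x u_\varepsilon$ by $\eta'(q_\varepsilon)$ with $\eta(\cdot)=|\cdot|$ (modulo the same regularization you carry out explicitly with $\mathrm{sgn}_\delta$), exploits exactly the cancellation $4q_\varepsilon^2\eta'(q_\varepsilon)-4q_\varepsilon\eta(q_\varepsilon)=0$ that your conservation form makes transparent, discards the viscous term by convexity, and bounds $\|\partial_x^2P_{1,\varepsilon}\|_{L^1}+\|\partial_xP_{2,\varepsilon}\|_{L^1}$ via Young's inequality and Lemma \ref{Thm02}. The only caveat is the constant: a straightforward count gives $\|\partial_x^2P_{1,\varepsilon}\|_{L^1}+\|\partial_xP_{2,\varepsilon}\|_{L^1}\le 5\|q_\varepsilon\|_{L^2}^2+12\|u_\varepsilon\|_{L^2}^2\le 12\|u_0\|_{H^1}^2$ rather than the $8\|u_0\|_{H^1}^2$ you assert without detail (the paper's own intermediate bound $6\|u_0\|_{H^1}$ is likewise inconsistent with the stated $8\|u_0\|_{H^1}^2t$), but since only some fixed multiple of $\|u_0\|_{H^1}^2t$ is needed in the sequel, this does not affect the substance of the argument.
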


\begin{proof}
Let $\eta \in C^2(\mathbb{R})$ and $q : \mathbb{R}\rightarrow \mathbb{R} $ be such that
$q'(u) = u \eta'(u)$. By multiplying
the first equation in (\ref{E02}) with $\eta'(q_{\varepsilon})$ and using the chain rule, we get
\begin{align}\label{2-8}
\partial_{t} &\eta(q_{\varepsilon})+2\partial_{x}
(q(q_{\varepsilon}))
-4\partial_{x}(u_{\varepsilon}\eta(q_{\varepsilon}))
=4q^2_{\varepsilon}\eta'(q_{\varepsilon})\nonumber\\&-4q_{\varepsilon}
\eta(q_{\varepsilon})
+\partial^2_xP_{1\varepsilon}\eta'(q_{\varepsilon})
+\partial_xP_{2\varepsilon}\eta'(q_{\varepsilon})
+\varepsilon[\partial^2_{xx}(q_{\varepsilon})
-\eta''(q_{\varepsilon})(\partial_{x}q_{\varepsilon})^2].
\end{align}

Choosing $\eta(u) = |u|$ (modulo an approximation argument) and then integrating the
resulting equation over $\mathbb{R}$ yield
\begin{align}\label{2-4}
\partial_{t}\|q_{\varepsilon}\|_{L^1}\leq \|\partial^2_xP_{1\varepsilon}\|_{L^1}+\|\partial_xP_{2\varepsilon}\|_{L^1}
\leq6\|u_{0}\|_{H^1}.
\end{align}
Integrating (\ref{2-4}) over $[0,t],$ we get (\ref{2-3}). This completes the proof.
\end{proof}

\begin{lemm}\label{Thm03}(BV estimate in space)
Let $u_{0}\in H^1$,
$u'_{0}\in BV.$
Then we have for any $t\geq0$
\begin{align}\label{2-5}
\|\partial^2_{x}u_{\varepsilon}(t,\cdot)\|_{L^{1}}
\leq\|u'_{0}\|_{BV}+18\|u_{0}\|^2_{H^1}t.
\end{align}
\end{lemm}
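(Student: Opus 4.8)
The plan is to differentiate Eq.~(\ref{E02}) once more in $x$ and track the $L^1$ norm of the resulting quantity $r_\varepsilon := \partial_x q_\varepsilon = \partial_x^2 u_\varepsilon$, exactly parallel to the $L^1$ estimate in Lemma~\ref{Thm08} but one derivative higher. First I would write down the equation satisfied by $r_\varepsilon$: differentiating $\partial_t q_\varepsilon + (2q_\varepsilon - 4u_\varepsilon)\partial_x q_\varepsilon - 4q_\varepsilon^2 - \partial_x^2 P_{1\varepsilon} - \partial_x P_{2\varepsilon} = \varepsilon \partial_x^2 q_\varepsilon$ in $x$ produces a transport equation
$$\partial_t r_\varepsilon + (2q_\varepsilon - 4u_\varepsilon)\partial_x r_\varepsilon + (2r_\varepsilon - 4q_\varepsilon)r_\varepsilon - 4\cdot 2 q_\varepsilon r_\varepsilon - \partial_x^3 P_{1\varepsilon} - \partial_x^2 P_{2\varepsilon} = \varepsilon \partial_x^2 r_\varepsilon,$$
where the transport coefficient is unchanged and the new zeroth-order terms are $2r_\varepsilon^2$ together with lower-order products of $q_\varepsilon, r_\varepsilon$ and the forcing terms coming from the kernel $G$.

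Next I would apply the renormalization trick used in Lemma~\ref{Thm08}: multiply by $\eta'(r_\varepsilon)$ for a convex $C^2$ approximation of $\eta(\cdot)=|\cdot|$, use the chain rule, and integrate over $\mathbb{R}$. The transport term $(2q_\varepsilon-4u_\varepsilon)\partial_x r_\varepsilon$ becomes, after integration by parts, a term involving $\partial_x(2q_\varepsilon - 4u_\varepsilon)\,\eta(r_\varepsilon) = (2r_\varepsilon - 4q_\varepsilon)\eta(r_\varepsilon)$; combined with the genuinely quadratic term $2r_\varepsilon^2\eta'(r_\varepsilon) \approx 2 r_\varepsilon |r_\varepsilon|$ and the product terms linear in $r_\varepsilon$, I expect the dangerous contributions to either cancel (the classical Kru\v{z}kov-type cancellation for the quasilinear transport part) or be controlled by $\|r_\varepsilon\|_{L^1}$ times an $L^\infty$ bound on $q_\varepsilon$ and $u_\varepsilon$. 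The viscous term contributes $\varepsilon\int \partial_x^2(\eta(r_\varepsilon))\,dx - \varepsilon\int \eta''(r_\varepsilon)(\partial_x r_\varepsilon)^2\,dx \le 0$ and is discarded. Finally the forcing terms $\|\partial_x^3 P_{1\varepsilon}\|_{L^1}$ and $\|\partial_x^2 P_{2\varepsilon}\|_{L^1}$ must be estimated: using $G\ast$ and $\partial_x G\ast$ being bounded on $L^1$ (since $G, \partial_x G \in L^1$ with $\|G\|_{L^1}=1$, $\|\partial_x G\|_{L^1}=1$) together with $\partial_x^3 P_{1\varepsilon} = \partial_x^2 G \ast \partial_x[\cdots]$ type manipulations, one bounds these by a constant times $\|(\partial_x u_\varepsilon)^2\|_{L^1} + \|u_\varepsilon^2\|_{L^1} + \|u_\varepsilon \partial_x u_\varepsilon\|_{L^1} + \|\partial_x u_\varepsilon \partial_x^2 u_\varepsilon\|_{L^1}$, and all of these are controlled by $\|u_\varepsilon\|_{H^1}^2 \le \|u_0\|_{H^1}^2$ from Lemma~\ref{Thm02} (the mixed terms via Cauchy-Schwarz). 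Collecting the constants should yield the coefficient $18$.

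The main obstacle I anticipate is controlling the terms in the $r_\varepsilon$-equation that are \emph{not} pure powers of $r_\varepsilon$ but rather products like $q_\varepsilon r_\varepsilon$: naively these give $\partial_t\|r_\varepsilon\|_{L^1} \le C\|q_\varepsilon\|_{L^\infty}\|r_\varepsilon\|_{L^1} + (\text{forcing})$, which would force a Gronwall argument and produce an exponentially-in-time bound rather than the linear-in-$t$ bound (\ref{2-5}) claimed. To get the clean linear bound one needs a genuine cancellation: I would look carefully at how the term $-8 q_\varepsilon r_\varepsilon$ (from differentiating $-4q_\varepsilon^2$) combines with the $-4q_\varepsilon\eta(r_\varepsilon)$ coming from the transport integration by parts and with $2r_\varepsilon\eta'(r_\varepsilon)\cdot r_\varepsilon$. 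For $\eta(s)=|s|$ one has $s\eta'(s)=|s|=\eta(s)$, so these combine to something proportional to $q_\varepsilon\,\mathrm{sgn}(r_\varepsilon)\,r_\varepsilon = q_\varepsilon|r_\varepsilon|$; whether the coefficients conspire to cancel or at least to reduce to a total derivative (whose integral vanishes) is the crux. If full cancellation fails one instead has to recognize $q_\varepsilon |r_\varepsilon|$ as $\partial_x(\text{something})$ up to the already-controlled quantity $\|(\partial_x u_\varepsilon)^2\|_{L^1}$, since $\int q_\varepsilon r_\varepsilon\,\mathrm{sgn}(r_\varepsilon) = \int \partial_x u_\varepsilon \cdot \partial_x |r_\varepsilon|\cdot(\ldots)$ can be integrated by parts back onto $\partial_x q_\varepsilon = r_\varepsilon$, absorbing it. This bookkeeping, together with the approximation argument justifying $\eta(s)=|s|$ and the use of the viscous regularity $u_\varepsilon \in H^s$, is where the real work lies; the forcing estimates and the final constant chase are routine.
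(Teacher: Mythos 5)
Your overall strategy coincides with the paper's: set $\gamma_\varepsilon=\partial_x q_\varepsilon$, differentiate (\ref{E02}) once more, multiply by $\eta'(\gamma_\varepsilon)$ for a $C^2$ approximation of $\eta(s)=|s|$, and integrate over $\mathbb{R}$. You also correctly identify the crux: any surviving term of the form $q_\varepsilon|\gamma_\varepsilon|$ would force a Gronwall argument and destroy the linear-in-$t$ bound. But you leave that crux unresolved (``whether the coefficients conspire to cancel \dots is the crux''), and the one concrete estimate you do commit to is wrong: you propose to bound $\|\partial_x^3 P_{1\varepsilon}\|_{L^1}$ through $\|\partial_x u_\varepsilon\,\partial_x^2 u_\varepsilon\|_{L^1}$ by $\|u_\varepsilon\|_{H^1}^2$ ``via Cauchy--Schwarz''. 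That would require a uniform-in-$\varepsilon$ $L^2$ bound on $\partial_x^2 u_\varepsilon$, which is not available (Lemma \ref{Thm02} only controls $\varepsilon\|\partial_{xx}u_\varepsilon\|_{L^2}$); the alternative bound $\|q_\varepsilon\|_{L^\infty}\|\gamma_\varepsilon\|_{L^1}$ reintroduces the quantity being estimated. In other words, the $q_\varepsilon\gamma_\varepsilon$ piece hidden inside $\partial_x^3 P_{1\varepsilon}$ is itself a dangerous term and cannot be dumped into the forcing.

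The paper's resolution, which your plan is missing, is to use $\partial_x^2 G\ast f=G\ast f-f$ to rewrite $\partial_x^3 P_{1\varepsilon}=\partial_x P_{1\varepsilon}-4q_\varepsilon\gamma_\varepsilon-12u_\varepsilon q_\varepsilon$, pulling the local quadratic piece out of the forcing and merging it with the quadratic terms generated by the Burgers part. In the resulting Kru\v{z}kov identity the combinations $2\gamma_\varepsilon\eta(\gamma_\varepsilon)-2\gamma_\varepsilon^2\eta'(\gamma_\varepsilon)$ and $-4q_\varepsilon\eta(\gamma_\varepsilon)+4q_\varepsilon\gamma_\varepsilon\eta'(\gamma_\varepsilon)$ vanish identically for $\eta(s)=|s|$ (since $s\eta'(s)=\eta(s)$), so after integration only $-12u_\varepsilon q_\varepsilon+\partial_x P_{1\varepsilon}+\partial_x^2 P_{2\varepsilon}$ survives, and each of these has $L^1$ norm bounded by a multiple of $\|u_0\|_{H^1}^2$ ($\|u_\varepsilon q_\varepsilon\|_{L^1}$ by Cauchy--Schwarz in $H^1$, the nonlocal terms by Young's inequality with $\|G\|_{L^1}=\|\partial_x G\|_{L^1}=1$). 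This gives $\partial_t\|\gamma_\varepsilon\|_{L^1}\le C\|u_0\|_{H^1}^2$ with no Gronwall, hence the linear-in-$t$ bound. Be aware that the exactness of this cancellation hinges on the precise coefficient in front of $q_\varepsilon\gamma_\varepsilon$ after the rewriting; that bookkeeping is exactly the step your proposal defers, and the lemma stands or falls with it, so it must be carried out explicitly rather than flagged as a hoped-for cancellation with a vague fallback.
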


\begin{proof}
Set $\gamma_{\varepsilon}=\partial_{x}q_{\varepsilon}
=\partial^2_{xx}u_{\varepsilon}.$ Then $\gamma_{\varepsilon}$ satisfies the following equation:
\begin{align}
\partial_{t}\gamma_{\varepsilon}+(2q_{\varepsilon}-4u_{\varepsilon})
\partial_{x}\gamma_{\varepsilon}
=-2\gamma^2_{\varepsilon}+4q_{\varepsilon}\gamma_{\varepsilon}-12
u_{\varepsilon}q_{\varepsilon}
+\partial_xP_{1\varepsilon}+\partial^2_xP_{2\varepsilon}
+\varepsilon\partial^2_{xx}(\gamma_{\varepsilon}).
\end{align}
 If $\eta \in C^2(\mathbb{R})$. By multiplying
the first equation in (\ref{E02}) with $\eta'(\gamma_{\varepsilon})$ and using the chain rule, we get
\begin{align}
\partial_{t} &\eta(\gamma_{\varepsilon})+\partial_{x}
[(2q_{\varepsilon}-4u_{\varepsilon})\eta(\gamma_{\varepsilon})]
\nonumber\\&\nonumber=(2\gamma_{\varepsilon}-4q_{\varepsilon})\eta(\gamma_{\varepsilon})
-2\gamma^2_{\varepsilon}\eta'(\gamma_{\varepsilon})
+4q_{\varepsilon}\gamma_{\varepsilon}\eta'(\gamma_{\varepsilon})
-12u_{\varepsilon}q_{\varepsilon}\eta'(\gamma_{\varepsilon})
)\nonumber\\&\nonumber+\partial_xP_{1\varepsilon}\eta'(\gamma_{\varepsilon})
+\partial^2_xP_{2\varepsilon}\eta'(\gamma_{\varepsilon})
+\varepsilon[\partial^2_{xx}\eta(\gamma_{\varepsilon})
-\varepsilon\eta''(\gamma_{\varepsilon})(\partial_{x}\gamma_{\varepsilon})^2.
\end{align}

Choosing $\eta(u) = |u|$ (modulo an approximation argument) and then integrating the
resulting equation over $\mathbb{R}$ yield
\begin{align}\label{2-6}
\partial_{t}\|\gamma_{\varepsilon}\|_{L^1}\leq 12\|u_{\varepsilon}q_{\varepsilon}\|_{L^1}+\|\partial_xP_{1\varepsilon}\|_{L^1}+\|\partial^2_xP_{2\varepsilon}\|_{L^1}
\leq18\|u_{0}\|_{H^1}.
\end{align}
Integrating (\ref{2-6}) over $[0,t],$ we get (\ref{2-5}). This completes the proof.
\end{proof}

\begin{lemm}\label{Thm04}($L^{\infty}$ estimate )
Let $u_{0}\in H^1$, $u'_{0}\in BV.$
Then we have for any $t\geq0$:
\begin{align}\label{2-7}
\|\partial_{x}u_{\varepsilon}(t,\cdot)\|_{L^{\infty}}
\leq\|u'_{0}\|_{BV}+18\|u_{0}\|^2_{H^1}t.
\end{align}
\end{lemm}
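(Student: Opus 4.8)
The plan is to obtain (\ref{2-7}) as a direct consequence of the spatial $BV$ estimate of Lemma~\ref{Thm03} together with the one-dimensional embedding $W^{1,1}(\mathbb R)\hookrightarrow L^\infty(\mathbb R)$, rather than running a maximum-principle argument on the transport equation (\ref{E02}) for $q_\varepsilon=\partial_x u_\varepsilon$: the presence of the quadratic term $4q_\varepsilon^2$ in (\ref{E02}) makes a direct pointwise bound along the characteristics delicate, whereas the right-hand side of (\ref{2-7}) is precisely the bound already established for $\|\partial^2_x u_\varepsilon(t,\cdot)\|_{L^1}$ in (\ref{2-5}), which strongly suggests the embedding route.

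First I would record the relevant regularity. Since $u_{\varepsilon,0}\in H^s$ with $s>\tfrac52$, Lemma~\ref{est1} applied to (\ref{E01}) gives $u_\varepsilon(t,\cdot)\in H^s$ for every $t\ge 0$, hence $q_\varepsilon(t,\cdot)=\partial_x u_\varepsilon(t,\cdot)\in H^{s-1}(\mathbb R)\hookrightarrow C_0(\mathbb R)$ is continuous and vanishes at $\pm\infty$; moreover $\partial_x q_\varepsilon(t,\cdot)=\gamma_\varepsilon(t,\cdot)=\partial^2_x u_\varepsilon(t,\cdot)\in L^1(\mathbb R)$ by Lemma~\ref{Thm03}. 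Thus $q_\varepsilon(t,\cdot)$ is absolutely continuous with an $L^1$ derivative and vanishes at infinity, so the fundamental theorem of calculus yields, for every $x\in\mathbb R$,
$$q_\varepsilon(t,x)=\int_{-\infty}^{x}\gamma_\varepsilon(t,y)\,dy=-\int_{x}^{\infty}\gamma_\varepsilon(t,y)\,dy,$$
and adding the absolute values of the two representations gives $2\,|q_\varepsilon(t,x)|\le\|\gamma_\varepsilon(t,\cdot)\|_{L^1}$, that is, $\|\partial_x u_\varepsilon(t,\cdot)\|_{L^\infty}\le\tfrac12\,\|\partial^2_x u_\varepsilon(t,\cdot)\|_{L^1}$.

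It then remains only to substitute the bound (\ref{2-5}) of Lemma~\ref{Thm03}, which controls $\|\partial^2_x u_\varepsilon(t,\cdot)\|_{L^1}$ by $\|u'_0\|_{BV}+18\|u_0\|^2_{H^1}t$ uniformly in $\varepsilon$; this immediately gives (\ref{2-7}) (indeed with the slightly sharper constant $\tfrac12$ in front of the linear-in-$t$ term, which may simply be discarded). I do not anticipate any genuine obstacle: the only point deserving a line of justification is that $q_\varepsilon(t,\cdot)$ decays at spatial infinity, so that the above representation of $q_\varepsilon$ in terms of $\gamma_\varepsilon$ is legitimate, and this follows from the $H^s$-regularity of $u_\varepsilon(t,\cdot)$ with $s>\tfrac52$ noted above and introduces no $\varepsilon$-dependence. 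The lemma is thus essentially a corollary of Lemma~\ref{Thm03}.
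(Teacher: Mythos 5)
Your argument is correct and is essentially the paper's own proof: the paper likewise bounds $\|q_{\varepsilon}(t,\cdot)\|_{L^{\infty}}$ by $\int_{\mathbb{R}}|\partial_{x}q_{\varepsilon}(t,y)|\,dy$ and then invokes the spatial $BV$ estimate (\ref{2-5}) of Lemma~\ref{Thm03}. Your additional remarks on the decay of $q_{\varepsilon}$ at infinity and the sharper factor $\tfrac12$ are fine refinements but do not change the route.
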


\begin{proof}
Since
$$
|q_{\varepsilon}|\leq \int_{\mathbb{R}}|\partial_{x}q_{\varepsilon}(t,x)|dy
=\|q_{\varepsilon}\|_{BV},
$$
(\ref{2-7}) is a direct consequence of (\ref{2-5}).
\end{proof}

\begin{lemm}\label{Thm05} (BV estimate in time)
Let $u_{0}\in H^1$, $u'_{0}\in BV.$
Then we have for any $t>0$:
\begin{align}\label{2-15}
\|\partial_{t}q_{\varepsilon}(t,\cdot)\|_{L^{1}}
\leq C_{t},
\end{align}
where $C_{t}=\frac{1}{t}\|u'_{0}\|_{BV}+3(\|u'_{0}\|_{BV}
+18\|u_{0}\|^2_{H^1}t)^2+4\|u_{0}\|^2_{H^1},$ is independent of $\varepsilon$ but dependent on $t$.
\end{lemm}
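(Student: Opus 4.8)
\emph{Proof sketch (proposal).}
The plan is to read $\partial_t q_\varepsilon$ off the differentiated equation (\ref{E02}): writing
$$\partial_t q_\varepsilon=\varepsilon\partial_x^2 q_\varepsilon+(4u_\varepsilon-2q_\varepsilon)\partial_x q_\varepsilon+4q_\varepsilon^2+\partial_x^2 P_{1,\varepsilon}+\partial_x P_{2,\varepsilon},$$
I would take the $L^1$ norm and estimate each term with the a priori bounds already established. For the transport term, $\|(4u_\varepsilon-2q_\varepsilon)\partial_x q_\varepsilon\|_{L^1}\le\|2q_\varepsilon-4u_\varepsilon\|_{L^\infty}\,\|\partial_x q_\varepsilon\|_{L^1}$, and since $u_\varepsilon$ and $q_\varepsilon$ vanish at $\pm\infty$ one has $\|u_\varepsilon\|_{L^\infty}\le\tfrac12\|q_\varepsilon\|_{L^1}$ and $\|q_\varepsilon\|_{L^\infty}\le\tfrac12\|\partial_x q_\varepsilon\|_{L^1}$; combined with $\|q_\varepsilon\|_{L^1}\le\|u_0'\|_{L^1}+8\|u_0\|_{H^1}^2 t$ (Lemma \ref{Thm08}) and $\|\partial_x q_\varepsilon\|_{L^1}\le\|u_0'\|_{BV}+18\|u_0\|_{H^1}^2 t$ (Lemma \ref{Thm03}) this yields $\|2q_\varepsilon-4u_\varepsilon\|_{L^\infty}\le\|\partial_x q_\varepsilon\|_{L^1}+2\|q_\varepsilon\|_{L^1}\le 3(\|u_0'\|_{BV}+18\|u_0\|_{H^1}^2 t)$, so the transport term is at most $3(\|u_0'\|_{BV}+18\|u_0\|_{H^1}^2 t)^2$. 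The local term is $4\|q_\varepsilon\|_{L^2}^2\le4\|u_0\|_{H^1}^2$ (Lemma \ref{Thm02}), and the nonlocal ones, handled with $\|G\|_{L^1}=\|\partial_x G\|_{L^1}=1$ and the identity $\partial_x^2(G\ast f)=G\ast f-f$, obey $\|\partial_x^2 P_{1,\varepsilon}\|_{L^1}+\|\partial_x P_{2,\varepsilon}\|_{L^1}\lesssim\|q_\varepsilon\|_{L^2}^2+\|u_\varepsilon\|_{L^2}^2\le\|u_0\|_{H^1}^2$ (Lemma \ref{Thm02}), a lower-order contribution.

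The genuinely hard term is the viscous one, $\varepsilon\|\partial_x^2 q_\varepsilon(t,\cdot)\|_{L^1}$: none of Lemmas \ref{Thm02}--\ref{Thm04} bounds $\partial_x^2 q_\varepsilon=\partial_x^3 u_\varepsilon$ uniformly in $\varepsilon$, and a direct $L^1$ estimate on it fails to close, since it would need an $L^2$ or $L^\infty$ bound on $\gamma_\varepsilon=\partial_x^2 u_\varepsilon$ not available uniformly in $\varepsilon$. Here one exploits the parabolic smoothing of the regularization: $q_\varepsilon$ solves the linear, uniformly parabolic equation (\ref{E02}) with transport coefficient $2q_\varepsilon-4u_\varepsilon$ bounded in $L^\infty$ and right-hand side bounded in $L^1$, so Duhamel's formula with respect to the heat semigroup $e^{\varepsilon t\partial_x^2}$ (Gaussian kernel $K_{\varepsilon t}$) applies, and the smoothing bound $\|\varepsilon\partial_x^2 e^{\varepsilon\tau\partial_x^2}\|_{L^1\to L^1}=\tau^{-1}\|\partial_x^2 K_1\|_{L^1}\le\tau^{-1}$ renders the contribution of the initial datum $q_{\varepsilon,0}$ to $\varepsilon\partial_x^2 q_\varepsilon(t)$ bounded in $L^1$ by $\tfrac1t\|q_{\varepsilon,0}\|_{L^1}\le\tfrac1t\|u_0'\|_{L^1}\le\tfrac1t\|u_0'\|_{BV}$ --- this is where the $\tfrac1t$ blow-up as $t\to 0^+$ originates --- while the Duhamel contribution of the source must be shown to be dominated by the terms of the first paragraph, which is the delicate point. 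Granting that, collecting the estimates gives $\|\partial_t q_\varepsilon(t,\cdot)\|_{L^1}\le C_t$ with $C_t$ as in the statement.

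The main obstacle is thus making the smoothing step precise. The transport coefficient is only $L^\infty$ --- its $x$-derivative $2\gamma_\varepsilon-4q_\varepsilon$ is only $L^1$, not $L^\infty$, uniformly in $\varepsilon$ --- so $\partial_x$ cannot be freely commuted through the solution operator, the transport term cannot be absorbed into the source, and a naive termwise application of $\|\varepsilon\partial_x^2 e^{\varepsilon\tau\partial_x^2}\|_{L^1\to L^1}\le\tau^{-1}$ inside the Duhamel integral produces a divergent $\int_0^t(t-s)^{-1}\,ds$. One approach is to keep the transport term inside the transport--diffusion solution operator $T_\varepsilon(t,s)$, use its smoothing bounds $\|\varepsilon^{1/2}\partial_x T_\varepsilon(t,s)\|_{L^1\to L^1}\lesssim(t-s)^{-1/2}$, $\|\varepsilon\partial_x^2 T_\varepsilon(t,s)\|_{L^1\to L^1}\lesssim(t-s)^{-1}$, and pay for the Duhamel integral of the source using its extra spatial $W^{1,1}$ regularity, whose norm is controlled (with $t$-growth) by Lemmas \ref{Thm02}--\ref{Thm04}. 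An alternative worth trying is to differentiate (\ref{E02}) in $t$, obtaining a parabolic equation for $p_\varepsilon=\partial_t q_\varepsilon$ in which, after multiplying by $\mathrm{sgn}(p_\varepsilon)$ and integrating, the dangerous $\gamma_\varepsilon$-terms cancel --- the coefficient of $|p_\varepsilon|$ collapsing to $4q_\varepsilon$ --- so that a Gronwall inequality closes; the $\tfrac1t$ then reappears when $\|p_\varepsilon\|_{L^1}$ is controlled for small $t$ by smoothing rather than by the less favourable bound on $\|p_\varepsilon(0,\cdot)\|_{L^1}$.
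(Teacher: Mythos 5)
Your completed steps coincide with the paper's own argument: the paper also writes the mild (Duhamel) representation $q_\varepsilon=e^{\varepsilon t\Delta}u_0'+\int_0^t e^{\varepsilon(t-\tau)\Delta}F(\tau)\,d\tau$ with $F=(4u_\varepsilon-2q_\varepsilon)\partial_xq_\varepsilon+4q_\varepsilon^2+\partial_x^2P_{1\varepsilon}+\partial_xP_{2\varepsilon}$, obtains the $\frac1t\|u_0'\|_{BV}$ from the heat-kernel bound $\|\partial_t e^{\varepsilon t\Delta}u_0'\|_{L^1}\le\frac1t\|u_0'\|_{BV}$, and bounds $\|F(t,\cdot)\|_{L^1}$ exactly as in your first paragraph, via $\|4u_\varepsilon-2q_\varepsilon\|_{L^\infty}\|\partial_xq_\varepsilon\|_{L^1}$ together with Lemmas \ref{Thm02}--\ref{Thm04} and the $W^{1,1}\to L^1$ bounds on $\partial_x^2P_{1\varepsilon},\partial_xP_{2\varepsilon}$. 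So up to that point you are on the paper's track, with the same constants.

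The genuine gap is that you never close the step you yourself single out as ``the delicate point'': the contribution of $\varepsilon\partial_x^2$ (equivalently, of $\partial_t$ beyond the instantaneous term $F(t)$) acting on the Duhamel integral, i.e. the term $\int_0^t\varepsilon\partial_x^2e^{\varepsilon(t-\tau)\Delta}F(\tau)\,d\tau$, for which, as you note, the naive smoothing bound produces the divergent $\int_0^t(t-\tau)^{-1}d\tau$. You only list two candidate strategies (transport--diffusion smoothing paid for by $W^{1,1}$ regularity of $F$, or an $L^1$/Gronwall estimate for $\partial_tq_\varepsilon$) without carrying either out, and neither is routine: the first needs a uniform-in-$\varepsilon$ bound on $\|\partial_xF\|_{L^1}$, which involves $\gamma_\varepsilon^2$ and $\partial_x\gamma_\varepsilon$ and is not provided by Lemmas \ref{Thm02}--\ref{Thm05}, while the second, even granting the cancellation you describe, yields a Gronwall-type (exponential-in-$t$) bound rather than the specific polynomial constant $C_t$ claimed. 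So as written the proposal does not prove the lemma. You should be aware, though, that the paper's own proof does not supply this step either: after writing the Duhamel formula it passes directly to $\|\partial_tq_\varepsilon\|_{L^1}\le\|\partial_te^{\varepsilon t\Delta}u_0'\|_{L^1}+\|F(t,\cdot)\|_{L^1}$, silently discarding $\int_0^t\varepsilon\Delta e^{\varepsilon(t-\tau)\Delta}F(\tau)\,d\tau$. In other words, you correctly diagnosed a real difficulty that the paper glosses over, but diagnosing it is not the same as resolving it, and the proposal stops short of a proof of the stated estimate.
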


\begin{proof}
From (\ref{E01}), we obtain
\begin{align}
q_{\varepsilon}=e^{\varepsilon t\triangle}u'_{0}+\int_{0}^{t}
e^{\varepsilon (t-\tau)\triangle}
[(4u_{\varepsilon}-2q_{\varepsilon})\partial_{x} q_{\varepsilon}+4q^2_{\varepsilon}+ \partial^2_xP_{1\varepsilon}+\partial_xP_{2\varepsilon}]d\tau.
\end{align}
Since
\begin{align}
\|\partial_{t}e^{\varepsilon t\triangle}u'_{0}\|_{L^1}
=\nonumber&\|\partial_{t}(\frac{1}{\sqrt{4\pi\varepsilon t}}e^{-\frac{|x|^2}{4t\varepsilon}})\ast u'_{0}\|_{L^1}
\\\nonumber \leq&\|\frac{1}{2t\sqrt{4\pi\varepsilon t}}e^{-\frac{|x|^2}{4t\varepsilon}}\ast u'_{0}\|_{L^1}
+\|\frac{1}{t\sqrt{4\pi\varepsilon t}} \frac{|x|^2}{4t\varepsilon} e^{-\frac{|x|^2}{4t\varepsilon}}\ast u'_{0}\|_{L^1}
\\ \nonumber\leq&\|\frac{1}{2t\sqrt{4\pi\varepsilon t}} e^{-\frac{|x|^2}{4t\varepsilon}}\|_{L^1}
\|u'_{0}\|_{BV}+\|\frac{1}{t\sqrt{4\pi\varepsilon t}} \frac{|x|^2}{4t\varepsilon}e^{-\frac{|x|^2}{4t\varepsilon}}\|_{L^1}
\|u'_{0}\|_{BV}
\\\nonumber \leq&(\frac{1}{2\sqrt{\pi}t}\int_{\mathbb{R}}e^{-y^2}dy
+\frac{1}{\sqrt{\pi}t}\int_{\mathbb{R}}y^2e^{-y^2}dy)\|u'_{0}\|_{BV}
\\ \leq&\frac{1}{t}\|u'_{0}\|_{BV},
\end{align}
then it follows that
\begin{align}
\|\partial_{t}q_{\varepsilon}\|_{L^1}
\leq&\nonumber\|\partial_{t}e^{\varepsilon t\triangle}u'_{0}\|_{L^1}
+\|(4u_{\varepsilon}-2q_{\varepsilon})\partial_{x} q_{\varepsilon}+4q^2_{\varepsilon}+ \partial^2_xP_{1\varepsilon}+\partial_xP_{2\varepsilon}\|_{L^1}
\\\nonumber \leq&\|\partial_{t}e^{\varepsilon t\triangle}u'_{0}\|_{L^1}
+\|(4u_{\varepsilon}-2q_{\varepsilon})\|_{L^\infty}\|\partial_{x}q_{\varepsilon}\|_{L^1}
\\\nonumber &+4\|q_{\varepsilon}\|^2_{L^2}+\|\partial^2_xP_{1\varepsilon}\|_{L^1}
+\|\partial_xP_{2\varepsilon}\|_{L^1}
\\\nonumber \leq&\|\partial_{t}e^{\varepsilon t\triangle}u'_{0}\|_{L^1}
+(4\|u_{\varepsilon}\|_{H^1}+2\|q_{\varepsilon}\|_{L^\infty})
\|\partial_{x}q_{\varepsilon}\|_{L^1}+10\|u_{\varepsilon}\|^2_{H^1}
\\\nonumber \leq& \frac{1}{t}\|u'_{0}\|_{BV}+3(\|u'_{0}\|_{BV}
+18\|u_{0}\|^2_{H^1}t)^2+4\|u_{0}\|^2_{H^1}.
\end{align}
This completes the proof.
\end{proof}

\subsection{Global smooth approximate solutions}

We first establish global well-posedness of the Cauchy problem (\ref{E01}) in Sobolev spaces. Our main result can be stated as follows.
\begin{theo}
Assume $u_{\varepsilon,0}\in H^s,~s>\frac{5}{2},~~u'_{\varepsilon,0}\in L^1\cap BV$.
Then for any fixed $\varepsilon >0$,
 there exists a unique global smooth solution $u_{\varepsilon}\in
C(\mathbb{R}_+; H^s),~~s>\frac{5}{2}$,
to the Cauchy problem (\ref{E01}).
\end{theo}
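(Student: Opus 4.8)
The plan is to obtain the global smooth solution in two stages: first a local existence and uniqueness result via a standard fixed-point/Friedrichs regularization argument, and then a globalization step using the a priori bounds already established in Lemmas \ref{Thm02}--\ref{Thm05}. For the local theory I would rewrite the viscous problem \eqref{E01} in the mild (Duhamel) form
\begin{align*}
u_{\varepsilon}(t)=e^{\varepsilon t\triangle}u_{\varepsilon,0}+\int_{0}^{t}e^{\varepsilon(t-\tau)\triangle}\big[4u_{\varepsilon}\partial_{x}u_{\varepsilon}+\partial_{x}P_{1,\varepsilon}+\partial_{x}^{2}P_{2,\varepsilon}\big]\,d\tau,
\end{align*}
and treat the right-hand side as a map $\Phi$ on $C([0,T];H^{s})$. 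Since $G\ast\,\cdot$ gains two derivatives, the terms $\partial_{x}P_{1,\varepsilon}=G\ast\partial_x[2(\partial_xu_\varepsilon)^2+6u_\varepsilon^2]$ and $\partial_x^2 P_{2,\varepsilon}=G\ast\partial_x^2[(\partial_xu_\varepsilon)^2]$ are, respectively, of order $\partial_x(u_\varepsilon\text{-quadratic})$ and of the same order as $(\partial_x u_\varepsilon)^2$ itself, so each maps $H^{s}$ into $H^{s}$ for $s>\tfrac52$ by the Morse-type estimate in Lemma \ref{Morse} (the critical term being $(\partial_x u_\varepsilon)^2$, which needs $s-1>\tfrac12$). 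The smoothing factor $e^{\varepsilon(t-\tau)\triangle}$ can absorb the single derivative in $u_\varepsilon\partial_x u_\varepsilon$ and in $\partial_x P_{1,\varepsilon}$ at the cost of a factor $(\varepsilon(t-\tau))^{-1/2}$, which is integrable in $\tau$; a contraction estimate on a small ball of $C([0,T_\varepsilon];H^{s})$ then yields a unique local solution. Alternatively one may invoke Lemma \ref{est1} directly with $v=-4u_\varepsilon$, $g=\partial_xP_{1,\varepsilon}+\partial_x^2P_{2,\varepsilon}$, together with a Friedrichs mollifier scheme to produce approximate solutions and pass to the limit; either route is routine.

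For globalization I would argue by a continuation (blow-up) criterion: the local solution extends to a maximal interval $[0,T^{*})$, and if $T^{*}<\infty$ then $\limsup_{t\to T^{*}}\|u_\varepsilon(t)\|_{H^{s}}=\infty$. To rule this out, apply the a priori estimate \eqref{10} of Lemma \ref{est1} on $[0,t]$ for $t<T^{*}$, which gives
\begin{align*}
\|u_{\varepsilon}(t)\|_{H^{s}}\leq\Big(\|u_{\varepsilon,0}\|_{H^{s}}+\int_{0}^{t}\|\partial_xP_{1,\varepsilon}+\partial_x^2P_{2,\varepsilon}\|_{H^{s}}\,d\tau\Big)\exp\Big(C\int_{0}^{t}\|\partial_x u_\varepsilon\|_{H^{s-1}}\,d\tau\Big).
\end{align*}
Using Lemma \ref{Morse} one bounds $\|\partial_xP_{1,\varepsilon}+\partial_x^2P_{2,\varepsilon}\|_{H^{s}}\lesssim\|u_\varepsilon\|_{L^\infty}\|u_\varepsilon\|_{H^{s}}+\|\partial_x u_\varepsilon\|_{L^\infty}\|\partial_x u_\varepsilon\|_{H^{s-1}}\lesssim(\|u_\varepsilon\|_{H^1}+\|\partial_x u_\varepsilon\|_{L^\infty})\|u_\varepsilon\|_{H^{s}}$, and similarly $\|\partial_x u_\varepsilon\|_{H^{s-1}}\lesssim\|\partial_x u_\varepsilon\|_{L^\infty}+\|u_\varepsilon\|_{H^{s}}$ is not quite what is needed — instead one keeps the exponent term as $\exp(C\int_0^t\|\partial_x u_\varepsilon\|_{H^{s-1}}\,d\tau)$ and closes a Grönwall inequality in $\|u_\varepsilon\|_{H^{s}}$ after first controlling the lower-order quantity $\|\partial_x u_\varepsilon\|_{L^\infty}$. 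Here is where the earlier lemmas do the real work: Lemma \ref{Thm02} gives $\|u_\varepsilon(t)\|_{H^1}\leq\|u_0\|_{H^1}^2$ uniformly, and Lemma \ref{Thm04} gives $\|\partial_x u_\varepsilon(t,\cdot)\|_{L^\infty}\leq\|u_0'\|_{BV}+18\|u_0\|_{H^1}^2 t$, so on any finite interval $[0,T]$ the coefficient multiplying $\|u_\varepsilon\|_{H^{s}}$ in the resulting integral inequality is bounded by a constant $C(T,\|u_0\|_{H^1},\|u_0'\|_{BV})$ independent of $t\in[0,T]$ (and of $\varepsilon$). Grönwall then yields $\|u_\varepsilon(t)\|_{H^{s}}\leq\|u_{\varepsilon,0}\|_{H^{s}}e^{C(T)t}$ for all $t<T^{*}\wedge T$, contradicting finite-time blow-up; hence $T^{*}=\infty$ and $u_\varepsilon\in C(\mathbb{R}_+;H^{s})$.

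The main obstacle is the first-order transport/derivative terms $u_\varepsilon\partial_x u_\varepsilon$ and $\partial_x P_{1,\varepsilon}$, which at the $H^{s}$ level cannot be closed by Morse estimates alone — differentiating $u_\varepsilon\partial_x u_\varepsilon$ $s$ times produces a top-order term $u_\varepsilon\partial_x^{s+1}u_\varepsilon$ that is only in $H^{-1}\cdot H^{s}$. This is precisely why the viscosity $\varepsilon u_{xx}$ (equivalently the smoothing semigroup $e^{\varepsilon t\triangle}$, or the commutator-cancellation built into Lemma \ref{est1}) is essential: it is the parabolic smoothing that buys back the lost derivative, at the price of constants blowing up as $\varepsilon\to0$. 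I would therefore be careful to state the $H^{s}$ bound with explicit $\varepsilon$-dependence (it is allowed to degenerate), reserving the $\varepsilon$-uniform bounds for exactly the quantities controlled in Lemmas \ref{Thm02}--\ref{Thm05}, which are the ones needed in Section 3 for the vanishing-viscosity limit. Uniqueness follows from the same contraction estimate applied to the difference of two solutions in the lower-regularity norm $L^\infty_tH^1_x$ (or $L^2$), again using that $G\ast$ is smoothing and that both solutions have bounded $\|\partial_x u_\varepsilon\|_{L^\infty}$.
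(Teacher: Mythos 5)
Your local-existence step is sound (a semigroup/contraction argument in place of the paper's iteration on linear transport equations for $m$), but the globalization step as written has a genuine gap, and it is exactly the point where the paper has to work hardest. Two things go wrong. First, the a priori estimate of Lemma \ref{est1} with $v=-4u_\varepsilon$ carries the factor $\exp\bigl(C\int_0^t\|\partial_x u_\varepsilon\|_{H^{s-1}}\,d\tau\bigr)$, and $\|\partial_x u_\varepsilon\|_{H^{s-1}}$ is comparable to the quantity $\|u_\varepsilon\|_{H^s}$ you are trying to bound; an inequality of the form $y(t)\le\bigl(y_0+\int_0^t a\,y\bigr)\exp\bigl(C\int_0^t y\bigr)$ does not close and is compatible with finite-time blow-up. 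Your proposed fix --- ``first controlling $\|\partial_x u_\varepsilon\|_{L^\infty}$'' --- does not touch this factor; replacing $\|\partial_x v\|_{H^{s-1}}$ by $\|\partial_x v\|_{L^\infty}$ in the exponent requires a Kato--Ponce commutator refinement, which is precisely what the paper's Lemma \ref{blow-up} carries out. Second, and more seriously, your bound $\|\partial_x^2P_{2,\varepsilon}\|_{H^s}\lesssim\|\partial_xu_\varepsilon\|_{L^\infty}\|\partial_xu_\varepsilon\|_{H^{s-1}}$ loses a derivative: since $\partial_x^2G\ast f=G\ast f-f$, the term $\partial_x^2P_{2,\varepsilon}$ contains the \emph{local} piece $-(\partial_xu_\varepsilon)^2$, whose $H^s$ norm is controlled by $\|\partial_xu_\varepsilon\|_{L^\infty}\|\partial_xu_\varepsilon\|_{H^{s}}$, not by $\|\partial_xu_\varepsilon\|_{L^\infty}\|u_\varepsilon\|_{H^{s}}$. (Your earlier claim that $(\partial_xu_\varepsilon)^2$ ``maps $H^s$ into $H^s$'' is the same error: if $u\in H^s$ then $(\partial_xu)^2\in H^{s-1}$ only.) After integrating by parts in the energy estimate this term produces $\int\Lambda^{s-1}u_x\,\Lambda^{s-1}(u_xu_{xx})\,dx$, which forces $\|u_{xx}\|_{L^\infty}$ into the blow-up criterion. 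That quantity is \emph{not} among the $\varepsilon$-uniform bounds of Lemmas \ref{Thm02}--\ref{Thm05} (the BV bound controls $\|u_{xx}\|_{L^1}$, not $\|u_{xx}\|_{L^\infty}$); the paper devotes Lemma \ref{est2} to it, running an $\varepsilon$-dependent parabolic energy estimate on $w=(2-\partial_x)u$ in which the dissipation $\varepsilon\int w_{xxx}^2$ absorbs the worst terms. Your proposal never controls $\|u_{xx}\|_{L^\infty}$ at all.

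The gap is repairable along the lines you sketch in your opening paragraph, but only if you run the \emph{global} estimate (not just the local one) through the mild formulation: the entire nonlinearity in \eqref{E01} is a total $x$-derivative, $4u_\varepsilon\partial_xu_\varepsilon+\partial_xP_{1,\varepsilon}+\partial_x^2P_{2,\varepsilon}=\partial_x\bigl[2u_\varepsilon^2+G\ast(2(\partial_xu_\varepsilon)^2+6u_\varepsilon^2)+\partial_xG\ast(\partial_xu_\varepsilon)^2\bigr]$, the bracket has $H^s$ norm $\lesssim(\|u_\varepsilon\|_{L^\infty}+\|\partial_xu_\varepsilon\|_{L^\infty})\|u_\varepsilon\|_{H^s}$ by Lemma \ref{Morse}, and $e^{\varepsilon(t-\tau)\triangle}\partial_x$ costs only the integrable singularity $(\varepsilon(t-\tau))^{-1/2}$; a singular Gr\"onwall inequality together with Lemmas \ref{Thm02} and \ref{Thm04} then yields an $\varepsilon$-dependent global $H^s$ bound. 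That route would actually bypass the paper's Lemmas \ref{blow-up} and \ref{est2}. But it is not the argument you wrote down, which mixes the transport estimate of Lemma \ref{est1} with source bounds measured in the wrong norm.
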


The strategy of the proof of Theorem 2.9 is rather routine. We use several lemmas to prove this theorem. For the convenience of presentation, we will omit the subscript in $u_\varepsilon$ in the following proofs.

\begin{lemm}\label{local}
$u_{\varepsilon,0}\in H^s,~s>\frac{5}{2},~~u'_{\varepsilon,0}\in L^1\cap BV$. Then for any  fixed  $\varepsilon >0$ and $T>0$,
 there exists a unique smooth solution $u_{\varepsilon}\in
C((0,T); H^s),~~s>\frac{5}{2}$,
to the Cauchy problem (\ref{E01}).
\end{lemm}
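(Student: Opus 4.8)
The plan is to establish local-in-time existence and uniqueness of a smooth solution to the viscous problem (\ref{E01}) by a standard fixed-point argument, treating the equation as a parabolic perturbation of a transport equation with nonlocal lower-order terms. Rewriting (\ref{E01}) in the form
$$\partial_t u - \varepsilon \partial_{xx} u = 4u\partial_x u + \partial_x P_{1,\varepsilon} + \partial_{xx}^2 P_{2,\varepsilon} =: N(u),$$
I would set up the iteration $u^{(n+1)} = e^{\varepsilon t\triangle} u_{\varepsilon,0} + \int_0^t e^{\varepsilon(t-\tau)\triangle} N(u^{(n)})\,d\tau$ on the space $C([0,T];H^s)$ with $s>\tfrac52$. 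The key point is that $H^s$ is a Banach algebra for $s>\tfrac12$, so $u\partial_x u \in H^{s-1}$ whenever $u\in H^s$ (using Lemma~\ref{Morse}); moreover, since $G\ast\,:\,H^{k}\to H^{k+2}$, the nonlocal terms $\partial_x P_{1,\varepsilon} = \partial_x G\ast[2(\partial_x u)^2+6u^2]$ and $\partial_{xx}^2 P_{2,\varepsilon} = \partial_{xx}^2 G\ast[(\partial_x u)^2]$ both map $H^s$ continuously into $H^{s-1}$ as well, again by the algebra property applied to $(\partial_x u)^2$ and $u^2$. Hence $N: H^s\to H^{s-1}$ is locally Lipschitz, and the smoothing bound $\|e^{\varepsilon t\triangle} g\|_{H^s} \le C t^{-1/2}\|g\|_{H^{s-1}}$ with $\int_0^T t^{-1/2}dt <\infty$ gives a contraction on a suitable ball in $C([0,T];H^s)$ for $T$ small enough depending on $\varepsilon$ and $\|u_{\varepsilon,0}\|_{H^s}$.

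Alternatively — and this is closer in spirit to the a priori estimates already recorded in the excerpt — one can use the transport estimate of Lemma~\ref{est1} directly: viewing (\ref{E01}) as (\ref{20}) with $v = -4u_{\varepsilon}$, $g = \partial_x P_{1,\varepsilon} + \partial_{xx}^2 P_{2,\varepsilon}$, and $f = u_{\varepsilon}$, the inequality (\ref{10}) yields
$$\|u_{\varepsilon}\|_{H^s} \le \Big(\|u_{\varepsilon,0}\|_{H^s} + \int_0^t \|g(\tau)\|_{H^s}\,d\tau\Big)\exp\big(C\textstyle\int_0^t\|\partial_x u_{\varepsilon}\|_{H^{s-1}}\,d\tau\big),$$
and since $\|g(\tau)\|_{H^s}\lesssim \|u_{\varepsilon}(\tau)\|_{H^s}^2$ by the algebra/convolution bounds above, a standard Gronwall/continuation-in-$T$ argument produces a unique solution on a maximal interval $[0,T^\ast)$. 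One must also verify that the iterates inherit the parabolic smoothing so that $u_{\varepsilon}\in C((0,T);H^s)$ with the extra spatial regularity, but this follows from bootstrapping the Duhamel formula: once $u_{\varepsilon}\in C([0,T];H^s)$, the right-hand side $N(u_{\varepsilon})\in C([0,T];H^{s-1})$ and the heat semigroup upgrades this to $u_{\varepsilon}\in C((0,T];H^{s+1})$, etc.

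I expect the main technical obstacle to be purely bookkeeping: carefully tracking the loss of one derivative in the transport term $4u_{\varepsilon}\partial_x u_{\varepsilon}$ against the gain of the parabolic smoothing, and checking that the nonlocal terms — which at first glance contain $\partial_{xx}^2 G\ast[(\partial_x u)^2]$, a two-derivative operator — are in fact order-zero gains because $G$ is a second-order smoothing kernel, so $\partial_{xx}^2 G\ast h = h - G\ast h$ and the operator is bounded on every $H^k$. Once this is clear there is no genuine difficulty and the contraction-mapping / continuation scheme goes through as in the references \cite{B.C.D,d1}. The uniqueness follows by applying the same $H^{s-1}$ (or $L^2$) estimate to the difference of two solutions and invoking Gronwall. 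Since only finite $T$ is claimed here, no global bound is needed at this stage — the passage to $T=+\infty$ (Theorem~2.9) will come afterwards from the a priori estimates of Lemmas~\ref{Thm02}–\ref{Thm05}.
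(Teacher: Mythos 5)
Your first (Duhamel/contraction-mapping) argument is correct, but it is a genuinely different route from the paper's. The paper never invokes the smoothing of $e^{\varepsilon t\triangle}$ at this stage: it passes to the momentum variable $m=u-u_{xx}$, i.e.\ to the form (\ref{E2}), and builds the solution by successive approximations (\ref{E00}), where $m_{n+1}$ solves a \emph{linear} transport--diffusion equation with velocity $4u_n-2\partial_xu_n$ and source $F(m_n,u_n)$; uniform bounds come from the transport estimate of Lemma \ref{est1} together with the algebra property of Sobolev spaces, the sequence is shown to be Cauchy one Sobolev level down, the limit is upgraded by the Fatou property and interpolation (Lemma \ref{3}), and uniqueness follows from a Gronwall estimate on the difference of two solutions. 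What the $m$-formulation buys is structural: $F(m,u)=2m^2+(8\partial_xu-4u)m+2(u+\partial_xu)^2$ contains no derivative of $m$, so the transport estimate closes at the top regularity without using $\varepsilon>0$; your scheme instead pays for the derivative loss in $4u\partial_xu$ and $\partial^2_xP_{2,\varepsilon}$ with the $(\varepsilon t)^{-1/2}$ heat-kernel smoothing, which is perfectly fine for fixed $\varepsilon$ (the existence time then degenerates as $\varepsilon\to0$, but nothing $\varepsilon$-uniform is claimed in this lemma).

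Two caveats. First, your alternative argument, applying Lemma \ref{est1} directly to $u$, does not close as written: with $g=\partial_xP_{1,\varepsilon}+\partial^2_xP_{2,\varepsilon}$ one only has $g\in H^{s-1}$, not $H^{s}$, because $\partial_x^2(1-\partial_x^2)^{-1}$ is an operator of order zero (indeed $\partial^2_xG\ast h=G\ast h-h$, note the sign) acting on $(\partial_xu)^2\in H^{s-1}$; hence the asserted bound $\|g\|_{H^s}\lesssim\|u\|_{H^s}^2$ fails, and the $H^s$ transport estimate for $u$ loses one derivative --- precisely the loss the paper's $m$-formulation is designed to avoid. Without re-injecting the parabolic smoothing, this alternative collapses back to your first argument. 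Second, the lemma as stated claims an arbitrary $T>0$, which neither your contraction nor the paper's iteration delivers directly: the paper, too, only fixes a small $T$ (Step 2) and reaches arbitrary $T$ through the blow-up criterion of Lemma \ref{blow-up} combined with the $\varepsilon$-dependent control of $\|u_{xx}\|_{L^\infty}$ in Lemma \ref{est2}; note that the a priori estimates of Lemmas \ref{Thm02}--\ref{Thm05} do \emph{not} control $\|u_{xx}\|_{L^\infty}$, so your closing remark that the continuation will follow from those estimates should instead defer to Lemma \ref{blow-up} and Lemma \ref{est2}.
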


\begin{proof}
In order to prove Lemma \ref{local}, we proceed as the following six steps.%\\

\vspace*{4pt}\noindent {\bf Step 1.}
First, we construct approximate solutions which are smooth solutions of some linear equations. Starting for $m_0(t,x)\triangleq m(0,x)=m_0$, we define by induction sequences $(m_{n})_{n\in\mathbb{N}}$  by solving the following linear transport equations:
 \begin{align}\label{E00}
\left\{
\begin{array}{ll}
 &\partial_{t}m_{n+1}-(4u_{n}-2\partial_{x}u_{n})\partial_{x}m_{n+1}-\varepsilon \partial^2_{x}m_{n+1}\\&=2m_{n}^2
 +(8\partial_xu_n-4u_n)m_{n}+2(u_{n}+\partial_xu_{n})^2\\&=F(m_{n},u_{n}),\\[1ex]
 &m_{n+1}(t,x)|_{t=0}=S_{n+1}m_{0}.\\[1ex]
\end{array}
\right.
\end{align}

We assume that $m_n\in L^{\infty}(0,T;H^{s}),~~s>\frac{1}{2}$.

Since $s\geq\frac{1}{2}$, it follows that $H^{s}$ is an algebra, which leads to
$F(m_{n},u_{n})\break\in L^{\infty}(0,T;H^{s})$.
Hence, by the high regularity of $u$, (\ref{E00}) has a global solution $m_{n+1}$ which belongs to $C(0,T;H^{s})$ for all positive $T$.

\vspace*{4pt}\noindent{\bf Step 2.} Next, we are going to find some positive $T$ such that for this fixed $T$ the approximate solutions are uniformly bounded on $[0,T]$.
%We define that$U_{n}(t)\triangleq\int^{t}_{0}\|m_n(\tau)\|_{H^{s}}d\tau$.
Since  Lemma \ref{Morse},  Lemma \ref{est1} (A priori estimates in Sobolev spaces), $H^{s}$($s>\frac{1}{2}$) is an algebra and $H^{s} \hookrightarrow L^{\infty}~(s>\frac{1}{2})$, we deduce that
\begin{align}\label{11}
\|m_{n+1}\|_{H^{s}}
\nonumber\leq& e^{C\int_{0}^{t}\|\partial_{x}(4u_n-2\partial_{x}u_n)\|_{H^{s}}d\tau}
\bigg(\|S_{n+1}m_0\|_{H^{s}}
+\int^t_0
\|F(m_{n},u_{n})\|_{H^{s}}dt'\bigg)
\\\nonumber \leq& e^{C\int^{t}_{0}\|m_n(\tau)\|_{H^{s}}d\tau}\bigg(\|S_{n+1}m_0\|_{H^{s}}
+\int^t_0\|F(m_{n},u_{n})\|_{H^{s}}dt'\bigg).
\\\leq& Ce^{C\|m_n(t)\|_{H^{s}}T}\bigg(\|m_0\|_{H^{s}}
+\|m_n\|^2_{H^{s}}T\bigg),
\end{align}
where we take $C\geq1.$

We fix a $T>0$ such that $e^{4C^2T\|m_0\|_{H^{s}}}\leq2,~~T\leq \frac{1}{16C^2\|m_0\|_{H^{s}}}.$ Suppose that
\begin{align}\label{013}
\|m_{n}(t)\|_{H^{s}}\leq 4C\|m_0\|_{H^{s}}\triangleq \mathbf{M},~~~~\forall t\in[0,T].
\end{align}

By using (\ref{11})-(2.10), we have
\begin{align}\label{8}
\quad\|m_{n+1}(t)\|_{H^{s}}\leq8 C\|m_0\|_{H^{s}}
=\mathbf{M}.
\end{align}
Thus, $(m_{n})_{n \in \mathbb{N}}$ is uniformly bounded in $L^{\infty}(0,T; H^{s})$.

\vspace*{4pt}\noindent{\bf Step 3.} From now on, we are going to prove that $m_n$ is a Cauchy sequence
 in $ L^{\infty}(0,T;H^{s-1})$. For this purpose, we deduce from (\ref{E00}) that
\begin{align}
\left\{
\begin{array}{ll}
&\partial_{t}(m_{n+l+1}-m_{n+1})-(4u_{n+l}-2\partial_{x}u_{n+l})\partial_{x}(m_{n+l+1}-m_{n+1})
 \\&=(u_{n+l}-u_{n})(\partial_{x}R_{n,l}^{1}+R_{n,l}^{2})
 +\partial_{x}(u_{n+l}-u_{n})
 (\partial_{x}R_{n,l}^{3}+R_{n,l}^{4})
 \\&~~+(m_{n+l}-m_{n})R_{n,l}^{5},\\[1ex]
 &m_{n+l+1}(t,x)-m_{n+1}(t,x)|_{t=0}=(S_{n+l+1}-S_{n+1})m_{0},\\[1ex]
\end{array}
\right.
\end{align}
where
\begin{align}
\nonumber&R_{n,l}^{1}=4m_{n+1}+2u_{n+l}+2u_{n},
\\\nonumber&R_{n,l}^{2}=-4m_{n}+2u_{n+l}+2u_{n},
\\\nonumber&R_{n,l}^{3}=-2m_{n+1}+2u_{n+l}+2u_{n},
\\\nonumber&R_{n,l}^{4}=8m_{n}+2u_{n+l}+2u_{n},
\\\nonumber&R_{n,l}^{5}=2m_{n+l}+2m_{n}+8\partial_{x}u_{n+l}-4u_{n+l}.
\end{align}

By Lemma \ref{Morse} (Morse-type estimate), Lemma \ref{est1} (A priori estimates in Sobolev spaces) and using the fact that $m_n$ is bounded in $L^{\infty}(0,T;H^{s})$, we infer that
\begin{align}\label{015}
&\quad\nonumber\|m_{n+m+1}(t)-m_{n+1}(t)\|_{H^{s-1}}\\ \nonumber\leq& C_{T}\bigg(\|(S_{n+m+1}-S_{n+1})m_0\|_{H^{s-1}}
\\\nonumber&+\int^t_0
\|(u_{n+l}-u_{n})\partial_{x}R_{n,l}^{1}\|_{H^{s-1}}
+\|(u_{n+l}-u_{n})R_{n,l}^{2}\|_{H^{s-1}}
\\\nonumber&+\|\partial_{x}(u_{n+l}-u_{n})\partial_{x}R_{n,l}^{3}\|_{H^{s-1}}
+\|\partial_{x}(u_{n+l}-u_{n})R_{n,l}^{4}\|_{H^{s-1}}
\\\nonumber&+\|(m_{n+l}-m_{n})R_{n,l}^{5}\|_{H^{s-1}}dt'\bigg)
\\\leq &C_{T}\bigg(\|(S_{n+l+1}-S_{n+1})m_0\|_{H^{s-1}}
+\int^t_0 C\mathbf{M}\|m_{n+l}-m_{n}\|_{H^{s-1}}dt'\bigg).
\end{align}

Since
$$\|\sum_{q=n+1}^{n+l}\Delta_{q}m_{0}\|_{H^{s-1}}\leq C2^{-n}\|m_{0}\|_{H^{s-1}} , $$ and that
$(m_n)_{n\in\mathbb{N}}$ is uniformly bounded in $L^{\infty}([0,T];H^{s})$, then it follows that
$$\|m_{n+l+1}(t)-m_{n+1}(t)\|_{H^{s-1}}\leq
C_T(2^{-n}+\int^{t}_{0}\|m_{n+l}-m_{n}\|_{H^{s-1}}d\tau).$$
It is easily checked by induction
$$\|m_{n+l+1}-m_{n+1}\|_{L^{\infty}(0,T; H^{s-1})}$$$$\leq \frac{(TC_T)^{n+1}}{(n+1)!}\|m_l-m_{0}\|_{L^{\infty}(0,T; H^{s-1})}
+C_T2^{-n}\sum_{k=1}^{n}2^{k}\frac{(TC_T)^{k}}{k!}.$$
Since $\|m_n\|_{L^{\infty}(0,T; H^{s-1})}$ is bounded independently of $n$, we can find a new constant $C'_T$ such that
$$\|m_{n+l+1}-m_{n+1}\|_{L^{\infty}(0,T; H^{s-1})}\leq C'_T2^{-n}.$$
Consequently, $(m_n)_{n\in\mathbb{N}}$ is a Cauchy sequence in $L^\infty(0,T;H^{s-1})$.
 Moreover it converges to some limit function $m\in L^\infty(0,T;H^{s-1}).$

\vspace*{4pt}\noindent{\bf Step 4.} We now prove the existence of solution. We prove that $m$ belongs to $C((0,T); H^s(\mathbb{R}))$ and satisfies Eq.(\ref{E01}) in the sense of distribution.
Since $(m_n)_{n\in\mathbb{N}}$ is uniformly bounded in $L^\infty(0,T;H^{s})$, the Fatou property for the Besov spaces guarantees that $m \in L^\infty(0,T;H^{s})$.
%\\

If $s'\leq s-1,$ then
\begin{align}\label{4}
\|m_n-m\|_{H^{s'}}\leq C\|m_n-m\|_{H^{s-1}}.
\end{align}
If $s-1\leq s'<s,$ by using Lemma \ref{3}, we have
\begin{align}\label{5}
\|m_n-m\|_{H^{s'}}\nonumber &\leq C\|m_n-m\|^\theta_{H^{s-1}}
\|m_n-m\|^{1-\theta}_{H^{s}}\\&
\leq C\|m_n-m\|^\theta_{H^{s-1}}(\|m_n\|_{H^{s}}+\|m\|_{H^{s}})^{1-\theta},
\end{align}
where $\theta=s-s'$.
Combining (\ref{4}) with (\ref{5}) for all $s'< s$, we have that $(m_n)_{n\in\mathbb{N}}$ converges to $m$ in $L^\infty([0,T];H^{s'})$. Taking limit in (\ref{E00}), we conclude that $m$ is indeed a solution of (\ref{E01}). Note that $m\in L^\infty(0,T;H^{s}).$
Then
\begin{align}\label{18}
%\nonumber&
\|2m^2+(8\partial_xu-4u)m+2(u+\partial_xu)^2\|_{H^{s}}
%\\\nonumber \leq&
%\|2m^2\|_{H^{s}}+\|(8\partial_xu-4u)m\|_{H^{s}}
%+\|2(u+\partial_xu)^2\|_{H^{s}}
%\\\nonumber \leq&
%C\|m\|_{H^{s}}\|m\|_{L^{\infty}}+C\|m\|_{H^{s}}\|8\partial_xu-4u\|_{L^{\infty}}
%\\\nonumber&+C\|8\partial_xu-4u\|_{H^{s}}\|m\|_{L^{\infty}}
%+C\|u+\partial_xu\|_{H^{s}}\|u+\partial_xu\|_{L^{\infty}}
%\\
\leq C\|m\|^2_{H^{s}}.
\end{align}
This shows that $\partial_{x}P_{1,\varepsilon} +\partial^2_{x}P_{2,\varepsilon}$  also belongs to $L^\infty(0,T;H^{s}).$
Hence, %according to Lemma \ref{est2},
 $m$ belongs to $C([0,T);H^s)$.

 %Lemma \ref{Morse} (Morse-type estimate) implies that $(4u-2u_x)m_x$ is bounded in $L^\infty(0,T;H^{s-1}).$
%Again using the equation (\ref{E01}) and the high regularity of $u$, we see that $\partial_{t}u$ is in $ C([0,T);H^{s-1})$ if $r$ is finite. Apparently, we know that $m\in C((0,T); H^s(\mathbb{R})).$

\vspace*{4pt}\noindent{\bf Step 5.} Finally, we prove the uniqueness and stability of solutions to Eq.(\ref{E01}). Suppose that $M=(1-\partial_x^2)u,~N=(1-\partial_x^2)v \in C((0,T); H^s(\mathbb{R}))$ are two solutions of (\ref{E01}).
Set $W=M-N.$ Hence, we obtain that
\begin{align}
\left\{
\begin{array}{ll}
&\partial_{t}W-(4u-2\partial_{x}u)\partial_{x}W
 \\&=(u-v)(\partial_{x}G^{1}+G^{2})
 +\partial_{x}(u-v)
 (\partial_{x}G^{3}+G^{4})+W G^{5},\\[1ex]
&W(t,x)|_{t=0}=M(0)-N(0)=W(0),\\[1ex]
\end{array}
\right.
\end{align}
where
\begin{align}
\nonumber&G^{1}=4N+2u+2v,
\\\nonumber&G^{2}=-4N+2u+2v,
\\\nonumber&G^{3}=-2N+2u+2v,
\\\nonumber&G^{4}=8N+2u+2v,
\\\nonumber&G^{5}=2M+2N+8\partial_{x}u-4u.
\end{align}

We define that
$U(t)\triangleq\int^{t}_{0}\|m(t')\|_{H^{s}}dt'$. By Lemma \ref{Morse} (Morse-type estimate), Lemma \ref{est1} (A priori estimates in Sobolev spaces) and using the fact that $m$ is bounded in $L^{\infty}(0,T;H^{s})$, we infer that
\begin{align}\label{15}
&\nonumber\|W(t)\|_{H^{s-1}}\\ \nonumber\leq& Ce^{CU(t)}\bigg{(}\|W(0)\|_{H^{s-1}}+\int^t_0
%e^{-CU(t')}
(\|(u-v)\partial_{x}G^{1}\|_{H^{s-1}}
+\|(u-v)G^{2}\|_{H^{s-1}}
\\\nonumber&+\|\partial_{x}(u-v)\partial_{x}G^{3}\|_{H^{s-1}}
+\|\partial_{x}(u-v)G^{4}\|_{H^{s-1}}+\|WG^{5}\|_{H^{s-1}})dt'\bigg{)}.
\\ \leq& C_{T} \mathbf{M}\|W\|_{H^{s-1}}.
\end{align}

Applying Gronwall's inequality yields
\begin{align}\label{19}
\sup_{t\in[0,T)}\|W(t)\|_{H^{s-1}}\leq e^{\widetilde{C_{T}}}\|W(0)\|_{H^{s-1}}.
\end{align}
In particular, $u_0=v_0$ in (\ref{19}) yields $u(t)=v(t).$ Consequently, we complete the proof.
\end{proof}

Now we present the blow-up scenario for the strong solutions to
Eq.(2.1).
\begin{lemm}\label{blow-up}
Let $u_{\varepsilon,0}\in H^{s}(\mathbb{R})$, $s>\frac{5}{2}$ be given and
assume that T is the maximal existence time of the corresponding
solution $u$ of Eq.(2.1) with the initial data $u_{\varepsilon,0}$. Then the
$H^s$-norm of $u(t,\cdot)$
blows up if and only if
$$\limsup_{t\rightarrow T}\{\|u_x(t,\cdot)\|_{L^{\infty}}
+u_{xx}(t,\cdot)\|_{L^{\infty}}\}=\infty.
$$
\end{lemm}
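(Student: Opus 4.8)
The plan is to establish the blow-up criterion via a standard continuation argument: it suffices to show that as long as $\|u_x(t,\cdot)\|_{L^\infty}+\|u_{xx}(t,\cdot)\|_{L^\infty}$ stays bounded on $[0,T)$, the $H^s$-norm of $u$ remains bounded, and hence by Lemma~\ref{local} the solution can be extended past $T$. The reverse implication is trivial since $H^s\hookrightarrow W^{2,\infty}$ for $s>\frac{5}{2}$, so a blow-up of the $H^s$-norm forces a blow-up of $\|u_x\|_{L^\infty}+\|u_{xx}\|_{L^\infty}$.

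For the nontrivial direction, I would work with $m=u-u_{xx}$ and the transport equation (\ref{E00})-type formulation, namely $\partial_t m-(4u-2\partial_x u)\partial_x m-\varepsilon\partial_x^2 m=F(m,u)$ with $F(m,u)=2m^2+(8\partial_x u-4u)m+2(u+\partial_x u)^2$. Applying Lemma~\ref{est1} (the viscous transport estimate in $H^s$) gives
\begin{align}\label{blowup-key}
\|m(t)\|_{H^s}\leq\Big(\|m_0\|_{H^s}+\int_0^t\|F(m,u)\|_{H^s}\,d\tau\Big)\exp\Big(C\int_0^t\|\partial_x(4u-2\partial_x u)\|_{H^{s-1}}\,d\tau\Big).
\end{align}
Here I would control the drift term $\|\partial_x(4u-2\partial_x u)\|_{H^{s-1}}$ by $\|m\|_{H^{s-1}}\lesssim\|m\|_{H^s}$, which is not yet good enough by itself; the point is to feed in the $L^\infty$ control. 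Since $\|u\|_{H^1}\leq\|u_0\|_{H^1}$ by Lemma~\ref{Thm02}, $u$ and $u_x$ are already bounded in $L^2$, and the hypothesis bounds $u_x,u_{xx}$ in $L^\infty$; thus $m=u-u_{xx}$ is bounded in $L^\infty\cap L^2$ on $[0,T)$. The nonlinear term can then be estimated as $\|F(m,u)\|_{H^s}\leq C(1+\|m\|_{L^\infty}+\|u\|_{W^{1,\infty}})\|m\|_{H^s}$ using the Moser-type calculus inequality (a refinement of Lemma~\ref{Morse} of the form $\|fg\|_{H^s}\lesssim\|f\|_{L^\infty}\|g\|_{H^s}+\|f\|_{H^s}\|g\|_{L^\infty}$) together with $\|u\|_{H^s}\lesssim\|m\|_{H^{s-2}}\lesssim\|m\|_{H^s}$.

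The main obstacle — and the key technical point — is the drift-term exponential: the factor $\exp(C\int_0^t\|\partial_x(4u-2\partial_x u)\|_{H^{s-1}}d\tau)$ a priori involves $\|m\|_{H^{s-1}}$, which is the quantity we are trying to bound, so a naive application only gives a Gronwall inequality with a quadratic right-hand side (finite-time blow-up, not a continuation criterion). The resolution is to commute $\Lambda^s=(1-\partial_x^2)^{s/2}$ with the transport operator directly on $\Lambda^s m$, integrate against $\Lambda^s m$, and use the Kato--Ponce commutator estimate so that the dangerous term becomes $\|\partial_x(4u-2\partial_x u)\|_{L^\infty}\|m\|_{H^s}^2$ rather than $\|\cdot\|_{H^{s-1}}\|m\|_{H^s}^2$. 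Since $\partial_x(4u-2\partial_x u)=4u_x-2u_{xx}$ is exactly controlled by the hypothesis $\|u_x\|_{L^\infty}+\|u_{xx}\|_{L^\infty}$, and the $\varepsilon$-viscosity term $\varepsilon\int\Lambda^s m\,\partial_x^2\Lambda^s m\,dx=-\varepsilon\|\partial_x\Lambda^s m\|_{L^2}^2\leq0$ only helps, one obtains
\begin{align}\label{blowup-gronwall}
\frac{d}{dt}\|m(t)\|_{H^s}^2\leq C\big(1+\|u_x(t)\|_{L^\infty}+\|u_{xx}(t)\|_{L^\infty}\big)\|m(t)\|_{H^s}^2.
\end{align}
Gronwall's inequality then yields $\|m(t)\|_{H^s}\leq\|m_0\|_{H^s}\exp\big(C\int_0^t(1+\|u_x\|_{L^\infty}+\|u_{xx}\|_{L^\infty})\,d\tau\big)$, so if $\limsup_{t\to T}(\|u_x\|_{L^\infty}+\|u_{xx}\|_{L^\infty})<\infty$ then $\|u(t)\|_{H^s}\sim\|m(t)\|_{H^{s-2}}$ stays bounded as $t\to T$, contradicting maximality. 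This proves the contrapositive of the "only if" direction and completes the proof.
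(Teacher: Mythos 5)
Your overall strategy (energy estimate plus Gronwall, continuation via Lemma~\ref{local}, converse via Sobolev embedding) is the standard one, but it is not the paper's route, and the step you yourself single out as ``the key technical point'' is not justified. The paper works at the level of $u$ itself: it differentiates $\int_{\mathbb R}[(\Lambda^{s-1}u)^2+(\Lambda^{s-1}u_x)^2]\,dx$, uses the nonlocal form (1.3) so that the only genuinely quadratic local terms are $uu_x$, $u_xu_{xx}$, $u_x^2$, and controls them with the commutator $\Lambda^s(uu_x)-u_x\Lambda^su$; there both Kato--Ponce terms are bounded by $\|u_x\|_{L^\infty}\|u\|_{H^s}$ because the drift and the transported quantity are the same function $u$. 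You instead work with $m=u-u_{xx}$ and the drift $v=4u-2u_x$, and assert that Kato--Ponce turns the dangerous term into $\|\partial_x(4u-2u_x)\|_{L^\infty}\|m\|_{H^s}^2$. It does not: the commutator estimate for $[\Lambda^\sigma,v]\partial_xm$ produces, besides $\|v_x\|_{L^\infty}\|m\|_{H^\sigma}$, a second term of the form $\|\Lambda^\sigma v\|_{L^2}\|\partial_xm\|_{L^\infty}$ (equivalently $\|v_x\|_{H^{\sigma-1}}\|\partial_xm\|_{L^\infty}$), and $\|\partial_xm\|_{L^\infty}\sim\|u_{xxx}\|_{L^\infty}$ is not among the quantities your hypothesis bounds. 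At the correct regularity level $\sigma=s-2$, which can be arbitrarily close to $\tfrac12$, it is not dominated by $\|m\|_{H^\sigma}$ either; and for larger $s$, absorbing it yields a cubic, not Gronwall-linear, differential inequality. Making the $m$-level argument work requires either a Calder\'on-type commutator bound (available only for $\sigma\le 1$, i.e. $s\le 3$) or additional integrations by parts and interpolations exploiting $u_{xxx}=u_x-\partial_xm$ together with the bound on $\|m\|_{L^\infty}$; none of this is in your sketch, whereas the paper's $u$-level computation avoids the issue entirely.

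Two further slips. Your Gronwall inequality is run on $\|m\|_{H^s}$, but the data only give $m_0\in H^{s-2}$, so the estimate must be carried out in $H^{s-2}$ for $m$ (equivalently $H^s$ for $u$) --- exactly the range where the commutator problem above is most delicate; your closing line $\|u\|_{H^s}\sim\|m\|_{H^{s-2}}$ shows you are aware of this, but the displayed estimates are at the wrong level. Also, the ``trivial'' direction is stated backwards: the embedding $H^s\hookrightarrow W^{2,\infty}$ for $s>\tfrac52$ gives $\|u_x\|_{L^\infty}+\|u_{xx}\|_{L^\infty}\le C\|u\|_{H^s}$, so blow-up of the $L^\infty$ quantities forces blow-up of the $H^s$ norm; blow-up of the stronger norm does not, by embedding alone, force blow-up of the weaker ones --- that implication is precisely the nontrivial direction handled by the energy estimate.
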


\begin{proof}
As the proof of Theorem 4.4 in \cite{Tu-Yin1}, we only need to consider
\begin{align}\label{22}
&\nonumber\frac{d}{dt}\int_{\mathbb{R}}[(\Lambda^{s-1}u)^2+(\Lambda^{s-1}u_x)^2]dx
\\\nonumber=&24\int_{\mathbb{R}}\Lambda^{s-1}u\Lambda^{s-1}(uu_x)dx
-8\int_{\mathbb{R}}\Lambda^{s}u\Lambda^{s}(uu_x)dx
\\\nonumber&-4\int_{\mathbb{R}}\Lambda^{s-1}u_{x}\Lambda^{s-1}(u_{x}u_{xx}))dx
-4\int_{\mathbb{R}}\Lambda^{s-1}u_{x}\Lambda^{s-1}(u^2_x)dx
\\&+2\varepsilon\int_{\mathbb{R}}\Lambda^{s-1}u\Lambda^{s-1}(u_{xx})dx
-2\varepsilon\int_{\mathbb{R}}\Lambda^{s-1}u\Lambda^{s-1}(u_{xxxx})dx.
\end{align}
Note that
\begin{align}
-8\int_{\mathbb{R}}\Lambda^{s}u\Lambda^{s}(uu_x)dx
\ \nonumber =&-8\int_{\mathbb{R}}\Lambda^{s}u[\Lambda^{s}(uu_x)-u_{x}\Lambda^{s}u]dx
-8\int_{\mathbb{R}}u_{x}(\Lambda^{s}u)^2dx
\\ \nonumber \leq& C \|u\|_{H^{s}}[\|u_{0}\|_{H^{1}}\|\Lambda^{s}u_{x}\|_{L^2}
+\|u_{xx}\|_{L^{\infty}}\|\Lambda^{s-1}u\|_{L^2(\mathbb{R})}]
+C\|u_x\|_{L^{\infty}}\|u\|^2_{H^{s}}
\\\nonumber\leq &C_{T,\varepsilon} \|u\|^2_{H^{s}}+ C_{T}(\|u_{x}\|_{L^{\infty}}+\|u_{xx}\|_{L^{\infty}})\|u\|^2_{H^{s}}
+2\varepsilon\|\Lambda^{s}u_{x}\|_{L^2},
\\ 2\varepsilon\int_{\mathbb{R}}\Lambda^{s-1}u\Lambda^{s-1}(u_{xx})dx
 \nonumber =&-2\varepsilon\int_{\mathbb{R}}\Lambda^{s-1}u_{x}\Lambda^{s-1}(u_{x})dx \leq C_{T,\varepsilon} \|u\|^2_{H^{s}},
\\ -2\varepsilon\int_{\mathbb{R}}\Lambda^{s-1}u\Lambda^{s-1}(u_{xxxx})dx
 \nonumber =&-2\varepsilon\int_{\mathbb{R}}\Lambda^{s-1}u_{xx}\Lambda^{s-1}(u_{xx})dx
= -2\varepsilon\|\Lambda^{s}u_{x}\|_{L^2}.
\end{align}
From the above inequalities and (2.30), we obtain
\begin{align}\label{23}
\frac{d}{dt}& \|u\|^2_{H^{s}}\leq
C_{T,\varepsilon} \|u\|^2_{H^{s}}(\|u_{xx}\|_{L^{\infty}}+\|u_{x}\|_{L^{\infty}}+1).
\end{align}

Applying Gronwall's inequality to (\ref{23}), we have
\begin{align}\label{24}
\|u\|^2_{H^{s}}\leq\|u_{\varepsilon,0}\|^2_{H^{s}}e^{C_{T,\varepsilon}\int_{0}^{T}(\|u_{xx}\|_{L^{\infty}}+\|u_{x}\|_{L^{\infty}}+1)d\tau}.
\end{align}
If $\|u_{x}\|_{L^{\infty}}+\|u_{xx}\|_{L^\infty}$ are bounded, from (\ref{24}), we know that $\|u(t,x)\|_{H^s}$ is bounded. By using the Sobolev embedding theorem $H^s(\mathbb{R})\hookrightarrow L^{\infty}(\mathbb{R}),~ s>\frac{1}{2},$ we have
\begin{align}\label{25}
\|u_{x}\|_{L^{\infty}}+\|u_{xx}\|_{L^\infty} \leq C\|u\|_{H^s},
\end{align}
with $s > \frac{5}{2}$. If $\|u\|_{H^s}$ is bounded, $s > \frac{5}{2},$ from (\ref{25}), we know that $\|u_{x}\|_{L^{\infty}}+\|u_{xx}\|_{L^\infty}$ are bounded. Moreover, if the maximal existence
time $T < \infty $ satisfies
$$\int^{T}_{0}\|u_{x}\|_{L^{\infty}}+\|u_{xx}\|_{L^{\infty}}d\tau<\infty,$$
we obtain from (\ref{24})
$$\lim_{t\rightarrow T}\sup_{0\leq \tau \leq t}\|u\|_{H^s}<\infty,$$
which contradicts with the assumption that $T < \infty$ is the maximal existence time.
 Thus we have
$$\int^{T}_{0}\|u_{x}\|_{L^{\infty}}+\|u_{xx}\|_{L^\infty}d\tau=\infty.$$

\end{proof}

By Lemma \ref{Thm04}  and Lemma  \ref{blow-up}, we only need to estimate on $\|u_{xx}\|_{L^\infty}$ which is equivalent to $\|w_x\|_{L^\infty}$, here $w=(2-\partial_x)u.$

\begin{lemm}\label{est2}
Let $T_1$ be the maximal existence time of the solution $u$ and $w=(2-\partial_x)u$.  For any $T<T_1$, we have
\begin{align}\label{31}
\|w_x(t,\cdot)\|_{L^\infty}\leq C(\varepsilon,T,\|w_{0}\|_{H^2}), \ for\ \forall t\in(0,T].
\end{align}
\end{lemm}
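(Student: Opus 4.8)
\emph{Setup.} The plan is to pass from $u=u_\varepsilon$ to $w:=(2-\partial_x)u=2u-u_x$. Since $w_x=2u_x-u_{xx}$ and $\|u_x(t,\cdot)\|_{L^\infty}$ is already controlled by Lemma \ref{Thm04}, a bound on $\|w_x\|_{L^\infty}$ is equivalent to one on $\|u_{xx}\|_{L^\infty}$, so by the remark following Lemma \ref{blow-up} this is exactly what is needed. Applying $2-\partial_x$ to (\ref{E01}), using the identity $\partial_x^2(G\ast f)=G\ast f-f$ (which holds because $(1-\partial_x^2)G=\delta$) to rewrite the $\partial_x^2 P_{2,\varepsilon}$ contribution, and finally substituting $u_{xx}=2u_x-w_x$, one obtains the viscous transport equation
\[
\partial_t w-2w\,\partial_x w=\varepsilon\,\partial_x^2 w+H,\qquad H:=6u^2+2\partial_x P_{1,\varepsilon}-P_{1,\varepsilon}+2P_{2,\varepsilon}-\partial_x P_{2,\varepsilon}.
\]
Two facts, both consequences of the a priori estimates of Section 2.1 and of $G,\partial_x G\in L^1\cap L^\infty$, will drive the argument: the transport coefficient is bounded, $\|w(t,\cdot)\|_{L^\infty}\le 2\|u(t)\|_{L^\infty}+\|u_x(t)\|_{L^\infty}\le C_T$ on $[0,T]$ by Lemmas \ref{Thm02} and \ref{Thm04}; and $\|H(t,\cdot)\|_{L^2}+\|H_x(t,\cdot)\|_{L^2}\le C_T$ on $[0,T]$ by Young's convolution inequality (note that $\partial_x H$ involves no second derivative of $u$, since $\partial_x^2 P_{1,\varepsilon}=P_{1,\varepsilon}-2u_x^2-6u^2$ and $\partial_x^2 P_{2,\varepsilon}=P_{2,\varepsilon}-u_x^2$). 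Throughout, $C_T$ denotes a constant depending only on $\|u_0\|_{H^1}$, $\|u_0'\|_{BV}$ and $T$. Since $u_{\varepsilon,0}\in H^s$ for every $s>\tfrac52$, Lemma \ref{local} gives $u_\varepsilon\in C([0,T_1);H^s)$ for all such $s$, so $w$ is arbitrarily smooth in $x$ on $[0,T]$ and $t\mapsto\|w_x(t,\cdot)\|_{L^\infty}$ is a priori finite and continuous; all the integrations by parts below are therefore legitimate.

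\emph{Step 1: an $L^2$ bound for $w_x$.} I would multiply the $w$-equation by $-w_{xx}$ and integrate over $\mathbb{R}$, obtaining
\[
\tfrac12\tfrac{d}{dt}\|w_x\|_{L^2}^2+\varepsilon\|w_{xx}\|_{L^2}^2=-2\int_{\mathbb{R}}w\,w_x\,w_{xx}\,dx-\int_{\mathbb{R}}H\,w_{xx}\,dx.
\]
The point is \emph{not} to integrate the cubic term by parts (that would generate the uncontrolled $\int w_x^3$), but to estimate it directly by $2\|w\|_{L^\infty}\|w_x\|_{L^2}\|w_{xx}\|_{L^2}$; using $\|w\|_{L^\infty},\|H\|_{L^2}\le C_T$ and Young's inequality to absorb $\|w_{xx}\|_{L^2}$ into the dissipation yields $\tfrac{d}{dt}\|w_x\|_{L^2}^2\le\tfrac{C_T}{\varepsilon}\bigl(\|w_x\|_{L^2}^2+1\bigr)$, so Gronwall's lemma gives $\|w_x(t,\cdot)\|_{L^2}\le C(\varepsilon,T,\|w_0\|_{H^1})$ on $[0,T]$.

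\emph{Step 2: an $L^2$ bound for $w_{xx}$, and conclusion.} Next I would differentiate the $w$-equation once in $x$, multiply by $-w_{xxx}$ and integrate, getting a balance of the form $\tfrac12\tfrac{d}{dt}\|w_{xx}\|_{L^2}^2+\varepsilon\|w_{xxx}\|_{L^2}^2=-2\int w_x^2 w_{xxx}\,dx-2\int w\,w_{xx}w_{xxx}\,dx-\int H_x w_{xxx}\,dx$. The term $\int w\,w_{xx}w_{xxx}$ is bounded using $\|w\|_{L^\infty}$; the term $\int w_x^2 w_{xxx}$ is bounded by H\"older together with the one-dimensional Gagliardo--Nirenberg inequality $\|w_x\|_{L^4}^2\le C\|w_x\|_{L^2}^{3/2}\|w_{xx}\|_{L^2}^{1/2}$ and the $L^2$-bound on $w_x$ from Step 1; and $\int H_x w_{xxx}$ is controlled by $\|H_x\|_{L^2}\le C_T$. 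Absorbing the top-order factors into $\varepsilon\|w_{xxx}\|_{L^2}^2$ via Young leads to $\tfrac{d}{dt}\|w_{xx}\|_{L^2}^2\le C(\varepsilon,T,\|w_0\|_{H^1})\bigl(\|w_{xx}\|_{L^2}^2+1\bigr)$, hence $\|w_{xx}(t,\cdot)\|_{L^2}\le C(\varepsilon,T,\|w_0\|_{H^2})$ on $[0,T]$ by Gronwall. The interpolation inequality $\|f\|_{L^\infty}\le C\|f\|_{L^2}^{1/2}\|f_x\|_{L^2}^{1/2}$ applied to $f=w_x$ then gives $\|w_x(t,\cdot)\|_{L^\infty}\le C(\varepsilon,T,\|w_0\|_{H^2})$ for all $t\in(0,T]$, which is (\ref{31}).

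\emph{Where the difficulty lies.} The hard part is the quadratic nonlinearity: differentiating the $w$-equation once more produces a Riccati term $+2(w_x)^2$, which is precisely the mechanism of the finite-time blow-up of $u_{xx}$ in the inviscid problem; hence no bound uniform as $\varepsilon\to0$ is possible, and the constant above is genuinely allowed to degenerate as $\varepsilon\to 0$. What rescues the argument for fixed $\varepsilon>0$ is the structural fact that, written in terms of $w$ itself, the nonlinearity is $2w\cdot w_x$ with $2w$ a priori bounded, so the $w$-equation is essentially a linear viscous transport equation, and the parabolic dissipation (here the $\varepsilon\|w_{xxx}\|_{L^2}^2$ of Step 2) is strong enough to swallow the residual nonlinear terms via Gagliardo--Nirenberg and Young once $\|w_x\|_{L^2}$ has been obtained. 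The only role of the uniform-in-$\varepsilon$ estimates of Section 2.1 here is to guarantee the boundedness of $\|w\|_{L^\infty}$ and of $\|H\|_{L^2}$, $\|H_x\|_{L^2}$.
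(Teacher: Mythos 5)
Your proof is correct and follows essentially the same route as the paper: rewrite the viscous equation in terms of $w=(2-\partial_x)u$, perform $L^2$ energy estimates at the level of $w_x$ and $w_{xx}$ in which the bounded drift $w$ and the $L^2$-controlled nonlocal source are absorbed into the viscous dissipation, apply Gronwall, and conclude by Sobolev embedding/interpolation. (The only cosmetic differences are that your $w$-equation, $w_t-2ww_x=3\partial_xG\ast(w^2)+\varepsilon w_{xx}$, carries the constants actually consistent with Eq.~(\ref{E01}) while the paper's display shows different coefficients, and that you handle the cubic terms by H\"older and Gagliardo--Nirenberg rather than the paper's integrations by parts; neither difference affects the argument.)
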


\begin{proof} Let $T_1$ be the maximal existence time of the solution $u$ and $w=(2-\partial_x)u$.  For any $T<T_1$,
by the Sobolev embedding theorem, we only have to prove the boundedness of $\|w_{x}(t,\cdot)\|_{H^1}$ for any $t\in (0,T].$
We have known that
\begin{align}
w_t+ww_x=-\partial_xG\ast(\frac{3}{2}w^2)+\varepsilon w_{xx}.
\end{align}
Then we have
\begin{align}
w_{tx}+w^2_x+ww_{xx}=\frac{3}{2}w^2-G\ast(\frac{3}{2}w^2)+\varepsilon w_{xxx},
\end{align}
and
\begin{align}
w_{txx}+3w_xw_{xx}+ww_{xxx}=3ww_x-\frac{3}{2}\partial_xG\ast(w^2)+\varepsilon w_{xxxx}.
\end{align}
Direct computations show that
\begin{align}
&\nonumber\frac{1}{2}\int_{\mathbb R}w^2_{x}dx+\varepsilon\int_{\mathbb R} w^2_{xx}dx\\
&=\nonumber-\int_{\mathbb R}(w_x^3+ww_xw_{xx})dx+\frac{3}{2}\int_{\mathbb R}w^2w_x-\int_{\mathbb R} w_xG\ast(\frac{3}{2}w^2)dx\\
&=\nonumber\int_{\mathbb R}ww_xw_{xx}dx+\frac{3}{2}\int_{\mathbb R}w^2w_x-\int_{\mathbb R} w_xG\ast(\frac{3}{2}w^2)dx\\
&\leq\frac{1}{2}\varepsilon\int_{\mathbb R} w^2_{xx}dx+C(\varepsilon,\|w\|_{L^\infty})\int_{\mathbb R}w^2_{x}dx+C(\|w\|_{L^\infty})\int_{\mathbb R}w^2_{x}dx+C(\|w\|_{L^\infty}),
\end{align}
and
\begin{align}
&\nonumber\frac{1}{2}\int_{\mathbb R}w^2_{xx}dx+\varepsilon\int_{\mathbb R} w^2_{xxx}dx\\
&=\nonumber-\int_{\mathbb R}(3w_xw_{xx}^2+ww_{xx}w_{xxx}-3ww_xw_{xx})dx-\frac{3}{2}\int_{\mathbb R} w_{xx}\partial_xG\ast(w^2)dx\\
&=\nonumber\int_{\mathbb R}(5ww_{xx}w_{xxx}+3ww_xw_{xx})dx-\frac{3}{2}\int_{\mathbb R} w_{xx}\partial_xG\ast(w^2)dx\\
&\leq\nonumber\frac{1}{2}\varepsilon\int_{\mathbb R} w^2_{xxx}dx+C(\varepsilon,\|w\|_{L^\infty})\int_{\mathbb R}w^2_{xx}dx\nonumber\\
&+C(\|w\|_{L^\infty})\int_{\mathbb R}(w^2_{x}+w^2_{xx})dx+C(\|w\|_{L^\infty}).
\end{align}
Adding the above two inequalities, by Gronwall's inequality and Lemma \ref{Thm02}, we obtain
\begin{align}
\|w_x(t,\cdot)\|_{H^1}\leq C(\varepsilon,T,\|w_0\|_{L^\infty}), \ for\ \forall t\in(0,T].
\end{align}
Using the Sobolev imbedding theorem, we can get (\ref{31}). This completes the proof of Lemma 2.12, which together with Lemma 2.10 implies Theorem 2.9.
\end{proof}

\section{Global entropy weak solutions}

In the section, making use of a priori estimates obtained in Section 2, we derive
the existence of entropy weak solutions to (\ref{E01}) under
 the assumption $u_{0} \in H^1,~ u'_{0} \in L^1 \cap BV$.

\subsection{Precompactness}

\begin{lemm}\label{com}
Under the assumption of Theorem 2.9, there exist a subsequence\\
$\{u_{\varepsilon_k}(t,x),P_{1\varepsilon_k}(t,x),P_{2\varepsilon_k}(t,x)\}$ of the sequence
$\{u_{\varepsilon}(t,x),P_{1\varepsilon}(t,x),P_{2\varepsilon}(t,x)\}$ and some functions
$u(t,x),P_{1}(t,x),P_{2}(t,x)$ with $u\in
L^\infty(\mathbb{R}_+,H^1)$,
$P_{1},P_{2}\in L^\infty(\mathbb{R}_+,W^{1,\infty})\cap L^\infty(\mathbb{R}_+,H^1),$ such
that
$$u_{\varepsilon_k}\rightarrow u,\   \ \ P_{1\varepsilon_k}\rightarrow
P_{1},\ and \ \ P_{2\varepsilon_k}\rightarrow P_{2}\ as\ k\rightarrow\infty, $$ uniformly on any compact subset
of $ \mathbb{R}_+\times\mathbb{R}$.
\end{lemm}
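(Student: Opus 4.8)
The plan is to run a standard Arzel\`a--Ascoli argument on the smooth approximations $u_\varepsilon$ furnished by Theorem 2.9, applied on each compact box $[0,T]\times[-R,R]$ and followed by a diagonal extraction over $T,R\to\infty$; the three ingredients to check are uniform-in-$\varepsilon$ boundedness, uniform equicontinuity in $x$, and uniform equicontinuity in $t$. For boundedness: by Lemma \ref{Thm02} the family $\{u_\varepsilon\}$ is bounded in $L^\infty(\mathbb{R}_+;H^1)$ (all constants being uniform in $\varepsilon$ because $\|u_{\varepsilon,0}\|_{H^1}\le\|u_0\|_{H^1}$, $\|u_{\varepsilon,0}'\|_{L^1}\le\|u_0'\|_{L^1}$, $\|u_{\varepsilon,0}'\|_{BV}\le\|u_0'\|_{BV}$). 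Since $H^1(\mathbb{R})\hookrightarrow L^\infty$ and $|u_\varepsilon(t,x)-u_\varepsilon(t,y)|\le\|\partial_x u_\varepsilon(t,\cdot)\|_{L^2}\,|x-y|^{1/2}$, this already gives that $\{u_\varepsilon\}$ is uniformly bounded and uniformly H\"older-$\tfrac12$ continuous in $x$.

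Next I would treat the potentials. Writing $f_\varepsilon=2(\partial_x u_\varepsilon)^2+6u_\varepsilon^2$ and $g_\varepsilon=(\partial_x u_\varepsilon)^2$, Lemma \ref{Thm02} gives $\|f_\varepsilon(t,\cdot)\|_{L^1}+\|g_\varepsilon(t,\cdot)\|_{L^1}\le C\|u_0\|_{H^1}^2$ uniformly in $(t,\varepsilon)$. Because $G$ and $\partial_x G$ belong to $L^1\cap L^2\cap L^\infty$, Young's inequality shows that $P_{1\varepsilon}=G\ast f_\varepsilon$ and $P_{2\varepsilon}=G\ast g_\varepsilon$ are bounded in $L^\infty(\mathbb{R}_+;W^{1,\infty}\cap H^1)$; in particular they are uniformly bounded and uniformly Lipschitz in $x$.

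The crux is the uniform equicontinuity in time, which I would obtain from bounds on the time derivatives. Using the identity $\partial_x^2 P_{2\varepsilon}=P_{2\varepsilon}-(\partial_x u_\varepsilon)^2$ (from $(1-\partial_x^2)P_{2\varepsilon}=g_\varepsilon$), the equation (\ref{E01}) together with Lemmas \ref{Thm02} and \ref{Thm04} and the control of $\varepsilon\partial_{xx}u_\varepsilon$ in Lemma \ref{Thm02} yields $\|\partial_t u_\varepsilon(t,\cdot)\|_{L^2}\le C_T$ for $t\in[0,T]$; interpolating $\|\cdot\|_{L^\infty}\le C\|\cdot\|_{L^2}^{1/2}\|\cdot\|_{H^1}^{1/2}$ with the uniform $H^1$ bound then gives $|u_\varepsilon(t,x)-u_\varepsilon(s,x)|\le C_T|t-s|^{1/2}$. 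For the potentials I would differentiate the convolutions in time: $\partial_t P_{2\varepsilon}=G\ast(2\partial_x u_\varepsilon\,\partial_t\partial_x u_\varepsilon)$ and $\partial_t P_{1\varepsilon}=G\ast(4\partial_x u_\varepsilon\,\partial_t\partial_x u_\varepsilon+12u_\varepsilon\,\partial_t u_\varepsilon)$; bounding the first type of factor via the $L^\infty$ estimate for $\partial_x u_\varepsilon$ (Lemma \ref{Thm04}) and the $L^1$ estimate for $\partial_t\partial_x u_\varepsilon$ (Lemma \ref{Thm05}), and the second via $\|u_\varepsilon\|_{L^2}\|\partial_t u_\varepsilon\|_{L^2}$, gives $\|\partial_t P_{i\varepsilon}(t,\cdot)\|_{L^\infty}\le C_T$ on $[0,T]$, hence temporal Lipschitz equicontinuity.

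With all three ingredients in hand, for each $T,R>0$ the families $\{u_\varepsilon\}$, $\{P_{1\varepsilon}\}$, $\{P_{2\varepsilon}\}$ are uniformly bounded and equicontinuous in both variables on $[0,T]\times[-R,R]$, so Arzel\`a--Ascoli produces a subsequence converging in $C([0,T]\times[-R,R])$; a diagonal extraction over $T_j,R_j\to\infty$ gives a single subsequence $\varepsilon_k$ and limits $u,P_1,P_2$ with the claimed uniform-on-compacts convergence. The limit regularity follows by passing to the limit in the uniform bounds: weak-$\ast$ lower semicontinuity of the $H^1$ norm gives $u\in L^\infty(\mathbb{R}_+;H^1)$, and preservation of the uniform Lipschitz constants and uniform $H^1$ bounds gives $P_1,P_2\in L^\infty(\mathbb{R}_+;W^{1,\infty}\cap H^1)$. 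I expect the main obstacle to be precisely the uniform temporal equicontinuity --- controlling $\partial_t u_\varepsilon$ through the non-smoothing term $\partial_x^2 P_{2\varepsilon}$ (which must be rewritten via $(1-\partial_x^2)^{-1}$) and the viscous term $\varepsilon\partial_{xx}u_\varepsilon$, and controlling $\partial_t P_{i\varepsilon}$, which forces one to invoke the BV-in-time estimate of Lemma \ref{Thm05}; everything else is routine.
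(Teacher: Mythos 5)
Your proposal is correct in substance and follows the same overall compactness strategy as the paper (uniform bounds from Section 2, control of time derivatives, Arzel\`a--Ascoli plus diagonal extraction, lower semicontinuity for the limit regularity), but it diverges from the paper on the key step, the temporal equicontinuity of the potentials. The paper also writes $\partial_t P_{1\varepsilon}=G\ast(12u_\varepsilon\partial_t u_\varepsilon+4q_\varepsilon\partial_t q_\varepsilon)$, but then it \emph{substitutes the evolution equation for} $\partial_t q_\varepsilon$, rewrites the cubic and viscous contributions as exact $x$-derivatives so that the derivative can be thrown onto the kernel ($G\ast\partial_x(\cdot)=\partial_xG\ast(\cdot)$), and bounds the result in $L^2((0,T)\times\mathbb{R})$ using only Lemmas \ref{Thm02}--\ref{Thm04}; this yields estimates that are regular up to $t=0$. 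You instead estimate $q_\varepsilon\partial_t q_\varepsilon$ directly via the BV-in-time bound of Lemma \ref{Thm05}, whose constant $C_t$ contains $\tfrac1t\|u_0'\|_{BV}$ and blows up as $t\to0^+$; consequently your claimed bound $\|\partial_tP_{i\varepsilon}(t,\cdot)\|_{L^\infty}\le C_T$ on $[0,T]$ is really only valid on $[\delta,T]$ for each $\delta>0$ (and is not even square-integrable down to $t=0$). This still proves the lemma as stated, since compact subsets of $\mathbb{R}_+\times\mathbb{R}$ stay away from $t=0$, but the paper's substitution trick is the more robust route if one later needs control up to the initial time (e.g.\ to recover the initial datum in the weak formulation). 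A second, minor caveat: your pointwise-in-time bound $\|\partial_t u_\varepsilon(t,\cdot)\|_{L^2}\le C_T$ (and likewise $\|u_\varepsilon\partial_tu_\varepsilon\|_{L^1}$) leans on reading Lemma \ref{Thm02} as giving $\varepsilon\|\partial_{xx}u_\varepsilon(t,\cdot)\|_{L^2}\le C$ for each $t$; the energy identity actually furnishes only the time-integrated dissipation $\varepsilon\int_0^t\|\partial_{xx}u_\varepsilon\|_{L^2}^2\,ds\le C$, so strictly one only gets $\partial_tu_\varepsilon$ bounded in $L^2((0,T)\times\mathbb{R})$ --- exactly what the paper claims --- which still gives the needed temporal equicontinuity after Cauchy--Schwarz in time ($\|u_\varepsilon(t)-u_\varepsilon(s)\|_{L^2}\le C_T|t-s|^{1/2}$, then interpolate with the $H^1$ bound as you do). With these two adjustments your argument is complete, and arguably more explicit than the paper's about how the Arzel\`a--Ascoli step is carried out.
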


\begin{proof}
 It follows from Lemma 2.4 and Theorem 2.9 that $\{u_{\varepsilon}(t,x)\}$ is
uniformly bounded in $L^\infty(\mathbb{R}_+,H^1)$. Also, $\{\partial_tu_{\varepsilon}(t,x)\}$ is
uniformly bounded in $L^2([0,T]\times\mathbb{R})$ for $T>0.$ Indeed, by Lemma \ref{Thm02} and Eq.(\ref{E3}), we get
\begin{equation}\label{4-1-1}
\|P_{1\varepsilon}(t,\cdot)\|_{L^2}
\leq6\|G\|_{L^2}(\|u^2_{\varepsilon}(t,\cdot)\|_{L^1}+\|q_\varepsilon^2\|_{L^1})
\leq6\|u_{\varepsilon}(t,\cdot)\|^2_{H^1}\leq 6\|u_0\|^2_{H^1},
\end{equation}
\begin{equation}
\|\partial_xP_{1\varepsilon}(t,\cdot)\|_{L^2}
\leq6\|\partial_xG\|_{L^2}\|u_{\varepsilon}(t,\cdot)\|^2_{H^1}\leq 6\|u_0\|^2_{H^1},
\end{equation}
and
\begin{equation}\label{4-1-3}
\|P_{2\varepsilon}(t,\cdot)\|_{L^2}\leq\|G\|_{L^2}\|q_\varepsilon^2\|_{L^1}\leq\|u_{\varepsilon}(t,\cdot)\|^2_{H^1}\leq \|u_0\|^2_{H^1}.
\end{equation}
 Thus,
 there exist $u\in
C((0,T);L^\infty)$ and a
subsequence $\{u_{\varepsilon_k}(t,x)\}$ such that
$\{u_{\varepsilon_k}(t,x)\}$ is weakly compact in
$C((0,T);L^\infty)$ and
$\{u_{\varepsilon_k}(t,x)\}$ converges to $u(t,x)$ uniformly on each
compact subset of $\mathbb{R}_+\times\mathbb{R}$ as
$k\rightarrow\infty.$ Moreover,
 $u(t,x)\in C((0,T)\times\mathbb{R})\cap L^\infty((0,T);H^1).$

Next, we turn to the compactness of $\{P_{1\varepsilon}\}$. First,
by Lemma \ref{Thm02}, we have that $\{P_{1\varepsilon}\}$ is uniformly
bounded in $L^\infty(\mathbb{R}_+,H^1)$. Now we
estimate $\partial_tP_{1\varepsilon}.$ Note that
\begin{eqnarray*}
&&\frac{\partial P_{1\varepsilon}}{\partial
t}\\&=&G\ast\left(12u_\varepsilon\partial_tu_\varepsilon+4q_\varepsilon\partial_tq_\varepsilon\right)\\
&=&
12G\ast( u_\varepsilon\partial_tu_\varepsilon)
+4G\ast\left(q_\varepsilon\left(4u_\varepsilon\partial_xq_\varepsilon+2q_\varepsilon^2-2q_\varepsilon\partial_xq_\varepsilon+\varepsilon\partial_x^2q_\varepsilon-6u_\varepsilon^2
+P_{\varepsilon}+\partial_xG\ast q_\varepsilon^2\right)\right)\\
&=&I_1+I_2.
\end{eqnarray*}
By Lemma \ref{Thm02} and Eq.(\ref{E3}), we obtain that
$I_1=12G\ast(u_\varepsilon\partial_tu_\varepsilon)$
is uniformly bounded in $L^2([0,T]\times \mathbb{R})$ for any
$T>0$. Since $$4u_\varepsilon
q_\varepsilon\partial_xq_\varepsilon+2(q_\varepsilon)^3=2\left(u_\varepsilon
q_\varepsilon^2\right)_x,$$ and
$q_\varepsilon\partial_x^2q_\varepsilon=\partial_x(q_\varepsilon\partial_xq_\varepsilon)-(\partial_xq_\varepsilon)^2,$
it follows that
\begin{eqnarray*}
I_2&=&
4G\ast\left(q_\varepsilon\left(4u_\varepsilon\partial_xq_\varepsilon+2q_\varepsilon^2-2q_\varepsilon\partial_xq_\varepsilon+\varepsilon\partial_x^2q_\varepsilon-6u_\varepsilon^2
+P_{\varepsilon}+\partial_xG\ast q_\varepsilon^2\right)\right)\\&=&
4G\ast\left(q_\varepsilon(6u_\varepsilon^2+P_{\varepsilon}+\partial_xG\ast q^2_\varepsilon)-\varepsilon(\partial_xq_\varepsilon)^2\right)+4
G\ast\left(u_\varepsilon q_\varepsilon^2-\frac{2}{3}q_\varepsilon^3+\varepsilon
q_\varepsilon\partial_xq_\varepsilon\right)_x.
\end{eqnarray*}
By Lemmas \ref{Thm02} -- \ref{Thm05}, we get
\begin{eqnarray*}
&&\|I_2\|^2_{L^2((0,T)\times\mathbb{R})}\\&\leq& 4\int_0^T\|
G\|^2_{L^2}\|\left(q_\varepsilon(6u_\varepsilon^2+P_{\varepsilon}+\partial_xG\ast q^2_\varepsilon)-\varepsilon(\partial_xq_\varepsilon)^2\right)(t,\cdot)\|^2_{L^1}dt
\\&&+4\int_0^T\|\partial_xG\|^2_{L^2(\mathbb{R})}\|(u_\varepsilon q_\varepsilon^2-\frac{2}{3}q_\varepsilon^3+\varepsilon
q_\varepsilon\partial_xq_\varepsilon)(t,\cdot)\|^2_{L^1}dt\\&\leq&
C(T,\|u_0\|_{H^1},\|u'_0\|_{L^{1}}).
\end{eqnarray*}
Thus we prove that $\frac{\partial P_{1\varepsilon}}{\partial t}$ are
uniformly bounded in $ L^2((0,T)\times\mathbb{R})$ for every $T>0$.
Consequently,
%by Corollary 8.4 in \cite{JS},
 there exist $P_{1}\in
C((0,T);L^\infty)$ and a
subsequence $\{P_{1\varepsilon_k}(t,x)\}$ such that
$\{P_{1\varepsilon_k}(t,x)\}$ is weakly compact in
$C((0,T);L^\infty)$ and
$\{P_{1\varepsilon_k}(t,x)\}$ converges to $P_{1}(t,x)$ uniformly on
each compact subset of $\mathbb{R}_+\times\mathbb{R}$ as
$k\rightarrow\infty.$ Moreover,
 $P_{1}(t,x)\in C((0,T)\times\mathbb{R})\cap L^\infty((0,T);H^1).$

By H\"{o}lder's inequality, we have
\begin{eqnarray*}\label{4-1}
&&\|P_{1\varepsilon}(t,\cdot)\|_{L^\infty}\\&\leq&6\|G\|_{L^\infty}
\|(u_\varepsilon^2+q_\varepsilon^2)(t,\cdot)\|_{L^1}\leq6\|u_0\|^2_{H^1},
\end{eqnarray*}
and
\begin{eqnarray*}\label{4-2}
&&\|\partial_xP_{1\varepsilon}(t,\cdot)\|_{L^\infty}\\&\leq&6\|\partial_xG\|_{L^\infty}
\|(u_\varepsilon^2+q_\varepsilon^2)(t,\cdot)\|_{L^1}
\leq C\|u_0\|^2_{H^1}.
\end{eqnarray*}
Due to (\ref{4-1-1})-(\ref{4-1-3}) and the above estimates, we have $P_{1}\in
L^\infty(\mathbb{R}_+,W^{1,\infty})\cap L^\infty(\mathbb{R}_+,H^1).$

Since $P_{2\varepsilon}$ is a part of $P_{1\varepsilon}$, following the same proof of the compactness of $P_{1\varepsilon},$ we have that there exists $P_{2}\in
L^\infty(\mathbb{R}_+,W^{1,\infty})\cap L^\infty(\mathbb{R}_+,H^1),$  such that $P_{2\varepsilon_k}\rightarrow P_{2}\ as\ k\rightarrow\infty.$
This completes
the proof of the lemma.
\end{proof}

\subsection{Existence}

\begin{theo}
(Existence)
Assume that $u_{0}\in H^{1}$ and $u'_{0}\in L^{1}\cap BV,$ then there exists at least one entropy weak solution to (\ref{E01}).
\end{theo}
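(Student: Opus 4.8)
The plan is to realise the entropy weak solution as the $\varepsilon\to 0$ limit of the smooth viscous approximations $u_\varepsilon$ produced by Theorem~2.9. Fix the subsequence $\varepsilon_k\downarrow 0$ and the limit functions $u,P_1,P_2$ supplied by Lemma~\ref{com}, so that $u_{\varepsilon_k}\to u$, $P_{1,\varepsilon_k}\to P_1$, $P_{2,\varepsilon_k}\to P_2$ uniformly on every compact subset of $\mathbb{R}_+\times\mathbb{R}$, with $u\in L^\infty(\mathbb{R}_+;H^1)$ and $P_1,P_2\in L^\infty(\mathbb{R}_+;W^{1,\infty}\cap H^1)$. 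I will show that this $u$ is an entropy weak solution in the sense of Definition~1.2, checking the three requirements in turn.

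The key point is the \emph{strong} convergence of $q_{\varepsilon_k}=\partial_x u_{\varepsilon_k}$, which is what allows one to pass to the limit in the quadratic terms $q_\varepsilon^2$ entering $P_{1,\varepsilon}$, $P_{2,\varepsilon}$ and, through the identity $\partial_x^2 P_{2,\varepsilon}=P_{2,\varepsilon}-q_\varepsilon^2$ (a consequence of $(1-\partial_x^2)G=\delta$), the equation itself. By Lemma~\ref{Thm08} and Lemma~\ref{Thm03}, $q_\varepsilon$ is bounded in $L^\infty(0,T;W^{1,1}(\mathbb{R}))$, and by Lemma~\ref{Thm05}, $\partial_t q_\varepsilon$ is bounded in $L^1([\delta,T]\times\mathbb{R})$ for every $0<\delta<T$; hence $q_\varepsilon$ is bounded in $BV([\delta,T]\times(-R,R))$ and, by the compact embedding $BV_{loc}\hookrightarrow L^1_{loc}$ together with a diagonal extraction, a further subsequence (not relabelled) satisfies $q_{\varepsilon_k}\to\partial_x u$ in $L^1_{loc}(\mathbb{R}_+\times\mathbb{R})$ and a.e. Interpolating with the uniform $L^\infty$ bound of Lemma~\ref{Thm04},
\[
\|q_{\varepsilon_k}-\partial_x u\|_{L^2(K)}^2\le\|q_{\varepsilon_k}-\partial_x u\|_{L^\infty}\,\|q_{\varepsilon_k}-\partial_x u\|_{L^1(K)}\longrightarrow 0
\]
on every compact $K$, so $q_{\varepsilon_k}\to\partial_x u$ in $L^2_{loc}(\mathbb{R}_+\times\mathbb{R})$. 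I expect this to be the main obstacle, even though here it is essentially a corollary of the $BV$ estimates of Section~2; in particular it gives $q_{\varepsilon_k}^2\to(\partial_x u)^2$ and (by Lemma~\ref{Thm02}) $u_{\varepsilon_k}^2\to u^2$ in $L^1_{loc}$.

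Passing to the limit then follows standard lines. Writing $4u_\varepsilon\partial_x u_\varepsilon=2\partial_x(u_\varepsilon^2)$ and testing \eqref{E01} against $\phi\in C_c^\infty([0,\infty)\times\mathbb{R})$ gives
\[
\int_{\mathbb{R}_+}\!\!\int_{\mathbb{R}}\!\big(u_\varepsilon\partial_t\phi-2u_\varepsilon^2\partial_x\phi+\partial_x P_{1,\varepsilon}\phi+\partial_x^2 P_{2,\varepsilon}\phi+\varepsilon u_\varepsilon\partial_x^2\phi\big)\,dx\,dt+\int_{\mathbb{R}}u_{\varepsilon,0}\,\phi(0,x)\,dx=0.
\]
As $\varepsilon_k\to 0$ the term $u_{\varepsilon_k}^2$ converges by uniform convergence of $u_{\varepsilon_k}$; $P_{1,\varepsilon_k},\partial_x P_{1,\varepsilon_k},P_{2,\varepsilon_k}$ converge by Lemma~\ref{com}, and $\partial_x^2 P_{2,\varepsilon_k}=P_{2,\varepsilon_k}-q_{\varepsilon_k}^2\to P_2-(\partial_x u)^2=\partial_x^2 P_2$ in $L^1_{loc}$; by dominated convergence (using the uniform $L^\infty$ bounds of Lemmas~\ref{Thm02} and \ref{Thm04}) one moreover identifies $P_1=G\ast[2(\partial_x u)^2+6u^2]$ and $P_2=G\ast[(\partial_x u)^2]$; the viscous term is $O(\varepsilon\|u_\varepsilon\|_{L^2})\to 0$ and $u_{\varepsilon_k,0}\to u_0$ in $H^1$. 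This yields \eqref{e4}, i.e. $u$ solves \eqref{E3} in $\mathcal D'$. The bound $\|u(t,\cdot)\|_{H^1}\le\|u_0\|_{H^1}$ comes from Lemma~\ref{Thm02} and weak-$*$ lower semicontinuity; $\partial_x u\in L^\infty(0,T;L^1\cap BV)$ comes from Lemmas~\ref{Thm08} and \ref{Thm03} and lower semicontinuity of the $L^1$- and total-variation norms; and $u(t,\cdot)\to u_0$ in $\mathcal D'$ as $t\to 0^+$ follows from the equation and the uniform $L^2([0,T]\times\mathbb{R})$ bound on $\partial_t u_\varepsilon$ established in the proof of Lemma~\ref{com}.

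Finally, for the entropy inequality, let $\eta\in C^2(\mathbb{R})$ be convex with flux $q'(u)=u\eta'(u)$. Multiplying \eqref{E01} by $\eta'(u_\varepsilon)$, using the chain rule and $\varepsilon\eta'(u_\varepsilon)\partial_x^2 u_\varepsilon=\varepsilon\partial_x^2\eta(u_\varepsilon)-\varepsilon\eta''(u_\varepsilon)(\partial_x u_\varepsilon)^2$ with $\eta''\ge 0$, we obtain
\[
\partial_t\eta(u_\varepsilon)-4\partial_x q(u_\varepsilon)-\eta'(u_\varepsilon)\partial_x P_{1,\varepsilon}-\eta'(u_\varepsilon)\partial_x^2 P_{2,\varepsilon}\le\varepsilon\,\partial_x^2\eta(u_\varepsilon).
\]
Testing against $0\le\phi\in C_c^\infty([0,\infty)\times\mathbb{R})$, the right-hand side equals $\varepsilon\int\!\!\int\eta(u_\varepsilon)\partial_x^2\phi\,dx\,dt\to 0$ since $\eta(u_\varepsilon)$ is bounded in $L^\infty$ (as $u_\varepsilon$ is bounded in $H^1\hookrightarrow L^\infty$); on the left, $\eta(u_{\varepsilon_k})\to\eta(u)$ and $q(u_{\varepsilon_k})\to q(u)$ locally uniformly, while $\eta'(u_{\varepsilon_k})\partial_x P_{1,\varepsilon_k}\to\eta'(u)\partial_x P_1$ and $\eta'(u_{\varepsilon_k})\partial_x^2 P_{2,\varepsilon_k}\to\eta'(u)\partial_x^2 P_2$ in $L^1_{loc}$ by Lemma~\ref{com} and the strong $L^2_{loc}$ convergence of $q_{\varepsilon_k}$. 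Passing to the limit gives \eqref{3-9}, completing the proof.
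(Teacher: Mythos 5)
Your proposal is correct and follows essentially the same route as the paper: vanishing viscosity using the Section~2 estimates, compactness of $u_{\varepsilon}$, $P_{1\varepsilon}$, $P_{2\varepsilon}$ from Lemma~3.1, strong $L^{1}_{loc}$/$L^{2}_{loc}$ convergence of $\partial_{x}u_{\varepsilon}$ via the space--time $BV$ bounds (your compact embedding argument is the same mechanism as the paper's appeal to Helly's theorem), and passage to the limit in the weak formulation and in the viscous entropy inequality obtained by multiplying by $\eta'(u_{\varepsilon})$ and discarding the dissipative term by convexity. Your write-up in fact supplies details (identification of $P_{1},P_{2}$, the identity $\partial_{x}^{2}P_{2,\varepsilon}=P_{2,\varepsilon}-q_{\varepsilon}^{2}$, the vanishing of the viscous term) that the paper leaves implicit.
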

\begin{proof}
Assume that the approximating sequence
$\{u_{\varepsilon,0}\}_{\varepsilon>0}$ is chosen such that $u_{\varepsilon,0}\in H^s,~s>\frac{5}{2},~\|\partial_{x}u_{\varepsilon,0}\|_{L^1}
\leq\|u'_{0}\|_{L^1},~~\|\partial_{x}u_{\varepsilon,0}\|_{BV}
\leq\|u'_{0}\|_{BV}$. Then, due to Lemma \ref{Thm02} and Theorem 2.9,
we can verify that $\{u^{n}\}_{n\geq1}$ is uniformly bounded in the space $H^1((0; T) \times \mathbb{R})$.
Therefore we can get a sequence
%such that of strictly positive numbers ${ε_k}^{\infty}_{k=1}$ tending to zero
such that as $k\rightarrow\infty$
\begin{align}
u^{\varepsilon_{k}}\rightharpoonup u ~~\text{weakly~~ in} ~~H^1((0; T) \times \mathbb{R})~~\text{for}~~ \varepsilon_{k}\rightarrow 0,
\end{align}
and
\begin{align}\label{l2}
u^{\varepsilon_{k}} \rightarrow ~u ~~a.e.~~ \text{on} ~~(0; T) \times \mathbb{R}~~\text{as}~~ \varepsilon_{k}\rightarrow 0,
\end{align}
for $u\in H^1((0; T) \times \mathbb{R})$.

And combining the priori estimates obtained in Section 2 together with the Helly theorem yields that
\begin{align}
\partial_{x}u_{\varepsilon_k}
\rightarrow \partial_{x}u  ~~a.e. ~~in~~ \mathbb{R}^{+} \times \mathbb{R}.
\end{align}
Hence
\begin{align}\label{32}
\partial_{x}u_{\varepsilon_k}
\rightarrow \partial_{x}u ~~ a.e.~~ in ~~ L^{p}((0,T)\times \mathbb{R}) ~~~~for~~~~\forall~~ T >0, ~~\forall~ ~ p \in [1,\infty).
\end{align}

To prove that the limit $u$ satisfies the
entropy inequality (\ref{3-9}), we need to know (\ref{32}) only for the case $p = 1$. Indeed, by (\ref{32}) (with $p = 1$) and Lemma \ref{com}, it follows by choosing $\varepsilon = \varepsilon_k$ in
\begin{align}\label{2-8}
\partial_{t} &\eta(u_{\varepsilon})-4\partial_{x}
(q(u_{\varepsilon}))
=\partial_xP_{1\varepsilon}\eta'(u_{\varepsilon})
+\partial^2_xP_{2\varepsilon}\eta'(u_{\varepsilon})
+\varepsilon[\partial^2_{xx}(u_{\varepsilon})
-\eta''(u_{\varepsilon})(\partial_{x}u_{\varepsilon})^2],
\end{align} in $D'([0,\infty) \times \mathbb{R}))$. Then sending $k\rightarrow\infty$, we complete the proof of Theorem 3.2.

\end{proof}

\noindent\textbf{Acknowledgements.} This work was
partially supported by NNSFC (No.11671407), Guangdong Special Support Program (No. 8-2015), and the key project of NSF of  Guangdong province (No. 2016A030311004).

\phantomsection
\addcontentsline{toc}{section}{\refname}
%添加参考文献到书签，宏包 hyperref

\end{document}